\newtheorem{theorem}{Theorem}[section]
\theoremstyle{plain}
\newtheorem{corollary}[theorem]{Corollary}
\newtheorem{defi}[theorem]{Definition}
\newtheorem{lemma}[theorem]{Lemma}
\newtheorem{prop}[theorem]{Proposition}
\newtheorem{remark}[theorem]{Remark}
\numberwithin{equation}{section}
\def\Ns{{\mathscr N}}
\newcommand \A {{\cal A}}
\newcommand \T {{\cal T}}
\newcommand \q {{\bf q}}
\newcommand \G {{\bf G}}
\def\ba{\bom}
\def\FrA{{\mathfrak A}}
\def\FrR{{\mathfrak R}}
\def\Aa{{\mathbb A}}
\def\Lip{{\rm Lip}}
\def\Xx{{\mathfrak X}}
\def\ef{{\mathfrak e}}
\def\gf{{\mathfrak g}}
\def\One{{1\!\!1}}
\def\Hk{{\mathcal H}}
\def\Xk{{\mathcal X}}
\def\wtil{\widetilde}
\def\wt{\wtil}
\def\half{\frac{1}{2}}
\newcommand{\lam}{\lambda}
\newcommand{\gam}{\gamma}
\newcommand{\om}{\omega}
\newcommand{\omb}{\boldsymbol \omega}
\def\Om{\Omega}
\newcommand{\Sig}{\Sigma}
\newcommand{\Gam}{\Gamma}
\newcommand{\sig}{\sigma}
\def\AA{{\mathbb A}}
\newcommand{\R}{{\mathbb R}}
\newcommand{\Z}{{\mathbb Z}}
\newcommand{\C}{{\mathbb C}}
\def\N{{\mathbb N}}
\newcommand{\Prob}{{\mathbb P}\,}
\def\P{\Prob}
\def\bp{{\bf p}}
\def\bq{{\bf q}}
\def\bw{{\bf w}}
\def\bqcyl{[\bq.\bq]}
\def\A{{\mathcal A}}
\def\Bk{{\mathcal B}}
\def\Rk{{\mathcal R}}
\def\Qk{{\mathcal Q}}
\def\Pk{{\mathcal P}}
\def\Sf{{\sf S}}
\def\T{{\mathbb T}}
\def\Vk{{\mathcal V}}
\def\Zk{{\mathcal Z}}
\def\be{\begin{equation}}
\def\ee{\end{equation}}
\newcommand{\Ek}{{\mathcal E}}
\newcommand{\Fk}{{\mathcal F}}
\newcommand{\eps}{{\varepsilon}}
\newcommand{\es}{\emptyset}
\newcommand{\HH}{{\mathcal H}}
\def\card{{\rm card}}
\def\ve1{\vec{1}}
\def\Fre{{\mathfrak E}}
\def\Frs{{\mathfrak S}}
\def\Frg{{\mathfrak G}}
\def\ok{{\mathfrak o}}
\def\Ak{{\mathcal A}}
\def\bom{{\bf a}}
\def\eb{{\bf e}}
\begin{document}

\title [H{\"o}lder regularity for the spectrum of translation flows]{H{\"o}lder regularity for the spectrum of  translation flows}

\author{Alexander I. Bufetov}
\address{Alexander I. Bufetov,
Aix-Marseille Universit{\'e}, CNRS, Centrale Marseille, I2M, UMR 7373,  39 rue F. Joliot Curie Marseille France}
\address{Steklov  Mathematical Institute of RAS, Moscow} \address{Institute for Information Transmission Problems, Moscow}

\email{bufetov@mi.ras.ru}

\author{Boris Solomyak }
\address{Boris Solomyak\\ Department of Mathematics, Bar-Ilan University, Ramat-Gan, Israel}
\email{bsolom3@gmail.com}

\begin{abstract} The paper is devoted to generic translation flows corresponding to Abelian differentials on flat surfaces of arbitrary genus $g\ge 2$. These flows are weakly mixing by the Avila-Forni theorem.
In genus 2, the H\"older property for the spectral measures of these flows  was
 established in  \cite{BuSo18a,BuSo19}.
Recently Forni \cite{Forni2}, motivated by \cite{BuSo18a}, obtained H\"older estimates for spectral measures in the case of surfaces of 
arbitrary genus.  Here we combine Forni's idea with  the symbolic approach of  \cite{BuSo18a} and 
prove  H\"older regularity for spectral measures of flows on random Markov compacta, in particular, for translation flows  
for an arbitrary genus $\ge 2$.
\end{abstract}

\maketitle

\section{Introduction}
\subsection{Formulation of the main result}

Let $M$ be a compact orientable surface.  To
  a holomorphic one-form $\omb$ on $M$ one can assign  the corresponding {\it vertical} flow $h_t^+$ on $M$, i.e., the flow at unit speed along the
 leaves of the foliation $\Re(\omb)=0$.
The vertical flow preserves the measure 
$
{\mathfrak m}=i(\omb\wedge {\overline \omb})/2, 
$
the area form induced by $\omb$. 
By a theorem of Katok \cite{katok}, the flow $h_t^+$ is never mixing.
The moduli space of abelian differentials carries a natural volume measure, called the Masur-Veech measure \cite{masur}, \cite{veech}.  
For almost every Abelian differential with respect to the Masur-Veech measure,  Masur \cite{masur} and Veech \cite{veech}  independently and simultaneously proved that the flow   $h_t^+$ is
uniquely ergodic.   Weak mixing for almost all  translation flows has been established by Veech in \cite{veechamj} under additional assumptions on the combinatorics of the abelian differentials and  by Avila and Forni
\cite{AF} in full generality. The spectrum of translation flows is therefore almost surely  continuous and always has a singular component. 

\thispagestyle{empty}

Sinai [personal communication] raised the question: to find the local asymptotics for the spectral measures of translation flows.
In \cite{BuSo18a,BuSo19} we developed an approach to  this problem and succeeded in obtaining H\"older estimates for spectral measures in the case of surfaces of genus 2.
The proof proceeds via uniform estimates of twisted Birkhoff integrals
in the symbolic
framework of random Markov compacta and arguments of Diophantine nature in the spirit of Salem, Erd\H{o}s and Kahane.
 Recently  Forni \cite{Forni2} obtained H\"older estimates for spectral measures  in the case of surfaces of arbitrary genus. While  Forni does not use the symbolic formalism, the main idea of his approach can also be formulated in symbolic terms: namely, that instead of the {\it scalar} estimates of \cite{BuSo18a,BuSo19}, we can use the Erd\H{o}s-Kahane argument in {\it vector} form, cf. \eqref{lattice} et seq. 
Following the idea of Forni and directly using  the vector form of the Erd\H{o}s-Kahane argument yields a considerable simplification of our initial proof and allows us to prove the H\"older  property for a general class of random Markov compacta, cf. \cite{Buf-umn}, and, in particular, for almost all translation flows on surfaces of arbitrary genus. 

Let $\HH$ be a stratum of abelian differentials on a surface of genus $g\ge 2$. The  natural smooth Masur-Veech measure on the stratum $\HH$ is denoted by $\mu_\HH$.
Our main result is
that for almost all abelian differentials in $\HH$, the spectral measures of Lipschitz functions with respect to the corresponding translation flows have the H{\"o}lder property. 
Recall that for a square-integrable test function $f$, the spectral measure $\sig_f$ is defined by
$$
\widehat{\sig}_f(-t) = \langle f\circ h_t^+, f \rangle,\ \ \ t\in \R,
$$
see Section~\ref{sec-twist}.
A point mass for the spectral measure corresponds to an eigenvalue, so H\"older estimates for spectral measures quantify weak mixing for our systems.


\begin{theorem}\label{main-moduli}
There exists $\gamma>0$ such that  for $\mu_\HH$-almost every abelian differential $(M, \omb)\in \HH$ the following holds. For any $B>1$ there exist constants $C=C(\omb,B)$ and $r_0=r_0(\omb,B)$ such that  for any Lipschitz function $f$ on $M$, for all $\lam\in [B^{-1},B]$  we have 
\be \label{eq-moduli}
\sig_f([\lam-r, \lam+r])\le C\|f\|_L\cdot r^\gamma\ \ \mbox{for all} \ r\in (0, r_0).
\ee
\end{theorem}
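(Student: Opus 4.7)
The plan is to reduce the H\"older bound \eqref{eq-moduli} to polynomial decay of the twisted Birkhoff integrals
\[
S^\lam_T f(x) \,=\, \int_0^T e^{-2\pi i \lam t}\, f(h_t^+ x)\, dt.
\]
A Parseval calculation with the indicator $\One_{[0,T]}$ together with the identity $\widehat{\sig}_f(-t)=\langle f\circ h_t^+,f\rangle$ yields the standard bound
\[
\sig_f([\lam-r,\lam+r]) \,\lesssim\, r^{2}\!\int_M |S^\lam_{1/r} f(x)|^2\, d{\mathfrak m}(x),
\]
so it is enough to produce $\gamma>0$ such that, for $\mu_\HH$-a.e.\ $\omb$ and every $B>1$,
\[
\|S^\lam_T f\|_{L^2({\mathfrak m})}^2 \,\le\, C\,\|f\|_L^2\, T^{2-\gamma},\qquad \lam\in[B^{-1},B],
\]
uniformly for $T$ large enough (depending on $\omb$ and $B$).

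Next I would pass to the symbolic framework of \cite{BuSo18a,BuSo19}. Via Rauzy--Veech--Zorich induction, for $\mu_\HH$-a.e.\ $\omb$ the flow $h_t^+$ is the suspension over a Vershik map on a random Markov compactum whose level-$n$ Rokhlin towers have height vector $\vec h^{(n)}=A_1\cdots A_n\,\one$ for the successive Rauzy--Veech matrices $A_i$. Telescoping $S_T^\lam f$ along this filtration expresses it, modulo boundary terms controlled by $\|f\|_L$ and the roof function of a fixed Veech compact section, as a product of twisted transfer operators acting on the vector-valued exponential sum
\[
\Phi_n(\lam) \,=\, \bigl(e^{2\pi i \lam h^{(n)}_j}-1\bigr)_{j=1}^d,
\]
contracted by the transpose Kontsevich--Zorich cocycle. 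Using Oseledets and integrability of the return time, the required decay of $\|S_T^\lam f\|_{L^2}^2$ reduces to an exponentially small bound on $|\Phi_n(\lam)|$ holding along a positive-density subsequence of renormalisation times.

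The core Diophantine input is the \emph{vector} form of the Erd\H{o}s--Kahane argument, the key idea taken from \cite{Forni2}. Where the scalar version of \cite{BuSo18a,BuSo19} estimates $\{\lam:\ \|\lam h^{(n)}\|_{\R/\Z}<\eta\}$ directly, here one tracks the full vector $(\{\lam h^{(n)}_j\})_{j=1}^d\in\Tt^d$: the ``bad'' set at level $n$ is cut out by $d$ simultaneous Diophantine conditions, whose joint Lebesgue measure is controlled by a combination of Kontsevich--Zorich Lyapunov exponents rather than just the top one. Iterating along a trajectory of the Teichm\"uller flow that is typical for a suitable large-deviation statement, and applying Borel--Cantelli, produces a full-measure set of $\omb$'s on which the set of bad $\lam\in[B^{-1},B]$ has exponentially small Lebesgue measure at level $n$; this then upgrades to the target polynomial bound on $\|S^\lam_T f\|_{L^2}^2$ and hence to \eqref{eq-moduli}.

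The main obstacle, as in \cite{BuSo18a,BuSo19}, is making the Diophantine estimate \emph{uniform} in $\lam$ across $[B^{-1},B]$, not merely pointwise. The resolution uses a Veech compact section for the Teichm\"uller flow on which the first-return Rauzy--Veech matrices are uniformly positive (Perron--Frobenius) and have uniformly bounded distortion, so that the per-step vector Erd\H{o}s--Kahane contribution is bounded independently of the visit, and the large-deviation statement for the Kontsevich--Zorich cocycle can be invoked for first-return times. The constants $C(\omb,B),\ r_0(\omb,B)$ in \eqref{eq-moduli} then reflect the $\omb$-dependent first return time to this compact section; extension from a dense countable set of Lipschitz test functions to every $f\in\Lip(M)$ is routine because the bound on $S^\lam_T f$ is linear in $\|f\|_L$ and the full-measure set of good $\omb$'s does not depend on $f$.
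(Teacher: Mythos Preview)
Your reduction of \eqref{eq-moduli} to polynomial growth of twisted Birkhoff integrals, and the passage to the symbolic (Bratteli--Vershik / $S$-adic) model, are both correct and match the paper. You also correctly identify that the crux is uniformity in $\lam\in[B^{-1},B]$.

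The gap is in how you propose to obtain that uniformity. You run the Erd\H{o}s--Kahane argument in the variable $\lam$: you speak of ``the set of bad $\lam\in[B^{-1},B]$ having exponentially small Lebesgue measure at level $n$,'' and then of upgrading this (via Borel--Cantelli and a compact section) to a uniform bound. But Borel--Cantelli in $\lam$ only yields the estimate for Lebesgue-a.e.\ $\lam$, and no amount of uniform Perron--Frobenius control on the renormalisation matrices will promote an a.e.-$\lam$ statement to an all-$\lam$ statement; the proposed resolution does not address the quantifier switch.

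The paper's argument reverses the roles of the variables. The frequency $\lam$ (denoted $\om$) is \emph{not} the random parameter in the Erd\H{o}s--Kahane step; the abelian differential is, via its symbolic roof vector $\vec{s}$. One defines, for each scale $N$, a set $E_N(\varrho,\delta,B)$ of vectors $\om\vec{s}$ that are ``Diophantine-bad,'' then \emph{existentially} quantifies over $\om\in[B^{-1},B]$ to obtain a set $\Ek_N$ of bad $\vec{s}$'s, and finally sets $\Fre=\limsup_N \Ek_N$. The vector Erd\H{o}s--Kahane argument estimates not the Lebesgue measure of bad $\lam$'s but the Hausdorff dimension of $\Fre$ inside the Oseledets subspace $H^J_\bom$: projecting $\om\vec{s}$ onto the strong unstable space $E^u_\bom$ and counting integer-lattice approximants to $\AA(n,\bom)(\om\vec{s})$ shows
\[
\dim_H\bigl(\Fre\cap H^J_\bom\bigr)\ \le\ \dim(H^J_\bom)-\kappa+\epsilon_1,
\]
where $\kappa=\dim E^u_\bom$ is the number of positive Lyapunov exponents. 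Since $\kappa\ge 2$ (Forni), this is strictly below $\dim(H^J_\bom)-1$, and the Masur--Veech conditional measure on the fibre is absolutely continuous, so a.e.\ $\vec{s}$ avoids $\Fre$. For such $\vec{s}$ the quantitative Veech criterion (a lower density bound for $n$ with $\|\AA(n,\bom)(\om\vec{s})\|_{\R^m/\Z^m}\ge\varrho$) holds \emph{for every} $\om\in[B^{-1},B]$ by the very definition of $\Fre$, and uniformity in $\lam$ follows without further work.

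In short: the vector nature of the Erd\H{o}s--Kahane argument does not serve to shrink a bad-$\lam$ set faster; it serves to drop the Hausdorff dimension of the bad-$\omb$ set by $\kappa$ instead of $1$, which is what allows it to be null for the Masur--Veech measure. Your proposal, as written, puts the exceptional set in the wrong variable and does not close the uniformity gap.
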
 

This theorem is analogous to Forni \cite[Corollary 1.7]{Forni2}.

\begin{remark} \label{remark-main}
{\em Our argument, as well as Forni's, see \cite[Remark 1.9]{Forni2}, remains valid for  almost every translation flow under a more general class of measures. Let $\mu$ be a Borel probability measure invariant and ergodic under the Teichm{\"u}ller flow. 
  Let $\kappa$ be the number of positive Lyapunov exponents for the Kontsevich-Zorich cocycle under the measure $\mu$.
 To formulate our condition precisely, recall that,  by the Hubbard-Masur theorem on the existence of cohomological coordinates,  the moduli space of abelian differentials with prescribed singularities can be locally identified with the space 
 ${\widetilde H}$ of relative cohomology, with complex coefficients, of the underlying surface with respect to the singularities.  Consider the subspace $H\subset {\widetilde H}$ corresponding to the absolute cohomology, the corresponding fibration of ${\widetilde H}$ into translates of $H$ and its image, a fibration $\overline {\mathcal F}$ on the moduli space of abelian differentials with prescribed singularities. Each fibre is locally isomorphic to $H$ and thus has dimension equal to $2g$, where $g$ is the genus of the underlying surface. We now restrict ourselves to the subspace of abelian differentials of area $1$, and let the fibration $\mathcal F$ be the restriction  of the fibration $\overline {\mathcal F}$; the dimension of each fibre of the fibration $\mathcal F$ is equal to $2g-1$.  Almost every fibre of $\mathcal F$ carries a conditional measure, defined up to multiplication by a constant,  of the measure $\mu$. If there exists 
 $\delta>0$ such that the Hausdorff dimension of the conditional measure of $\mu$ on almost every  fibre of $\mathcal F$ has Hausdorff dimension at least $2g-\kappa+\delta$, then Theorem \ref{main-moduli} holds for $\mu$-almost every abelian differential.
 
 In the case of the Masur-Veech measure $\mu_\HH$, it is well-known that the conditional measure on almost every fibre is mutually absolutely continuous with the Lebesgue measure, hence has Hausdorff dimension $2g$. By the celebrated result of Forni \cite{Forni}, there are $\kappa=g$ positive Lyapunov exponents for the Kontsevich-Zorich cocycle under the measure $\mu_\HH$, so 
 Theorem~\ref{main-moduli} will follow by taking any $\delta\in (0,1)$.
 }
 \end{remark}

 \medskip
 
The proof of the H\"older property for spectral measures proceeds via upper bounds on the growth of twisted Birkhoff integrals
$$
S_R^{(x)}(f,\lam) := \int_0^R e^{-2\pi i \lam \tau} f\circ h^+_\tau(x)\,d\tau.
$$

\begin{theorem} \label{th-twisted}
There exists $\alpha\in (0,1)$ such that for $\mu_\HH$-almost every abelian differential $(M, \omb)\in \HH$ and any $B>1$ there exist $C'=C'(\omb,B)$ and $R_0 = R_0(\omb,B)$ such that for any Lipschitz function $f$ on $M$, for all $\lam\in [B^{-1},B]$ and all $x\in M$, 
\begin{equation} \label{au-twisted}
\left|S_R^{(x)}(f,\lam)\right| \le C' R^\alpha\ \ \mbox{for all}\ R\ge R_0.
\end{equation}
\end{theorem}

This theorem is analogous to Forni \cite[Theorem 1.6]{Forni2}.
The derivation of Theorem~\ref{main-moduli} from Theorem~\ref{th-twisted} is standard, with $\gam = 2(1-\alpha)$; see Lemma~\ref{lem-varr}. In fact, in order to obtain (\ref{eq-moduli}), $L^2$-estimates (with respect to 
the area measure on $M$) of $S_R^{(x)}(f,\lam)$ suffice; we obtain bounds that are uniform in $x\in M$, which is of independent interest.

{
\begin{remark}
Using the approach of \cite{BuSo18b}, we expect that we can make the estimate in \eqref{eq-moduli} uniform
on the entire real line.
Such uniform estimates imply results on quantitative weak mixing in the form
\begin{equation} \label{eq-parab}
\frac{1}{R} \int_0^R \left|\langle f\circ h_t^+, g\rangle\right|\,dt \le C(f,g) \cdot R^{-\beta},
\end{equation}
for some $\beta>0$ and $f,g$ of the appropriate Lipschitz class, see \cite[Theorem 1.1]{BuSo18b}. 
Forni \cite[Corollary 1.8]{Forni2} derived a result similar to  \eqref{eq-parab} from the estimates of twisted Birkhoff integrals in the form \eqref{au-twisted} using  Fourier transform techniques, see
\cite[Lemma 9.3]{Forni2}, under the additional assumption that $f$ is sufficiently smooth.
\end{remark}
}

\subsection{Quantitative weak mixing} There is a close relation between H\"older regularity of spectral measures and quantitative rates of weak mixing, see \cite{BuSo18b,Forni2}. One can also note a connection of our arguments with the proofs of weak mixing, via Veech's criterion \cite{veechamj}. A translation flow can be represented, by considering its return map to a transverse interval, as a suspension flow over an interval exchange transformation (IET) with a roof function constant on each sub-interval. The roof function is determined by a vector $\vec s\in H\subset \R^m_+$ with positive coordinates, where $m$ is the number of sub-intervals of the IET and $H$ is a subspace of dimension $2g$ corresponding to the space of absolute cohomology from Remark 1.2.  Let $\AA(n,\bom)$ be the Zorich acceleration of the Masur-Veech cocycle on $\R^m$, corresponding to returns to a ``good set'', where $\bom$ encodes the IET. Veech's criterion \cite[\S 7]{veechamj} says that if 
$$
\limsup_{n\to \infty} \|\AA(n,\bom)\cdot \om \vec{s}\|_{\R^m/\Z^m}>0\ \ \mbox{for all}\ \om\ne 0,
$$
then the translation flow corresponding to $\vec s$ is weakly mixing. This was used by Avila and Forni \cite[Theorem A.2]{AF} to show that for almost every $\bom$ the set of $\vec s\in H$, such that the suspension flow is not weakly mixing, has Hausdorff dimension at most $g+1$. On the other hand, our Proposition~\ref{prop-quant} says that if the set
$$
\{n\in \N:\ \|\AA(n,\bom)\cdot \om \vec{s}\|_{\R^m/\Z^m} \ge \varrho\}
$$
has positive lower density in $\N$ for some $\varrho>0$ uniformly in $\om\ne 0$ bounded away from zero and from infinity, then spectral measures corresponding to Lipschitz functions have the H\"older property. The Erd\H{o}s-Kahane argument is used to estimate the dimension of those $\vec s$ for which this fails. 
In recent Forni's work, a version of the weak stable space for the Kontsevich-Zorich cocycle, denoted $W_K^s(h)$ and defined in \cite[Section 6]{Forni2}, seems analogous to our exceptional set $\Fre$ defined in (\ref{def-excep}). The Hausdorff dimension of this set is estimated in \cite[Theorem 6.3]{Forni2}, with the help of \cite[Lemma 6.2]{Forni2}, which plays a r\^ole similar to our Erd\H{o}s-Kahane argument.

\subsection{Organization of the paper, comparison with \cite{BuSo18a} and \cite{BuSo19}} A large part of the paper \cite{BuSo18a} was written in complete generality, without the genus 2 assumptions, and is directly used  here; for 
example, we do not reproduce the estimates of twisted Birkhoff integrals using generalized matrix Riesz products, but refer the reader to \cite[Section 3]{BuSo18a} instead.
 Sharper estimates of matrix Riesz products were obtained in \cite{BuSo19}, where the matrix Riesz products products are interpreted in terms of the {\em spectral cocycle}. In particular, we established  a formula relating the local upper Lyapunov exponents of the cocycle with the pointwise lower dimension of spectral measures. Note nonetheless that the cocycle structure is not used in this paper.
Section 3, parallel to \cite[Section 4]{BuSo18a}, contains the main result on H\"older regularity of spectral  measures in the setting of random $S$-adic, or, equivalently, random Bratteli-Verhik systems.
The main novelty is that here we only require that the second Lyapunov exponent $\theta_2$ of the 
Kontsevich-Zorich cocycle be positive, while in \cite{BuSo18a} the assumption was that $\theta_1 > \theta_2 >0$ are the {\em only} non-negative exponents. 
The preliminary Section 4 closely follows  \cite{BuSo18a}.
The crucial changes occur in Sections 5 and 6, where, in contrast with   \cite{BuSo18a},  Diophantine approximation is established {\it in  vector form}, cf. Lemma~\ref{lem-lattice}. The exceptional set is defined in (\ref{def-excep}), and the Hausdorff dimension of the exceptional set is estimated in Proposition~\ref{prop-EK}. Although the general strategy of the ``Erd\H{o}s-Kahane argument'' remains, the implementation is now significantly simplified. In \cite{BuSo18a} we worked with scalar parameters, the coordinates of the vector of heights with respect to the Oseledets basis, but here we simply consider the vector parameter and work with the projection of the vector of heights to the strong unstable subspace.
In particular,  the cumbersome estimates of \cite[Section 8]{BuSo18a} are no longer needed.
Section 7, devoted to the derivation of the main theorem on translation flows from its symbolic counterpart, parallels \cite[Section 11]{BuSo18a} with some changes. The most significant one is that we require a stronger property of the good returns, which is achieved in Lemma~\ref{lem-combi1}. On the other hand, the large deviation estimate for the Teichm\"uller flow required in Theorem~\ref{th-main1} remains unchanged, and we directly use  \cite[Prop.\,11.3]{BuSo18a}.

\subsection{Further directions} As mentioned in the abstract, the result for translation flows is a special case of a theorem on H\"older regularity for spectral measures of flows on random Markov compacta (see the next section for definitions).
Random two-sided Markov compacta endowed with a Vershik ordering and ergodic flows along their stable foliations were studied  in \cite{Buf-umn}.
 This is a very general symbolic framework, which offers a possibility of other applications. In particular, this includes random substitution tilings on the line and suspension flows over random $S$-adic systems.
{\em Self-similar} substitution tilings have also been studied extensively in higher dimensions. In \cite{BuSol--limit} we obtained limit theorems for the deviation of ergodic averages for self-similar tiling $\R^d$-actions. It would be very interesting also to obtain H\"older estimates for the spectrum of such systems. For {\em random} substitution $\R^d$-actions such estimates were recently obtained by 
Trevi\~no \cite{Tre_prepint}, who used a generalization of our symbolic approach. In another direction,
Lindsey and Trevi\~no \cite{LinTre} constructed a wide range of flat surfaces of infinite genus, but finite area, using bi-inifnite ordered Bratteli diagrams. It would be of interest to apply our results  to translation flows on surfaces of infinite genus  (we are grateful to Rodrigo Trevi\~no for this remark). 

\medskip

\noindent {\bf{Acknowledgements.}} We are deeply grateful to Giovanni Forni for generously sharing his ideas with us and for sending us his manuscript containing his proof of the H\"older property in arbitrary genus. We would like to thank Corinna Ulcigrai for her hospitality in Bristol (B. S.) and Z\"urich (A. B. and B. S.), and for many fruitful discussions. {\ We are grateful to the anonymous referees for a careful reading of the paper and many helpful critical comments.}

A. B.'s research  is supported by the European Research Council (ERC) under the European Union Horizon 2020 research and innovation programme, grant  647133 (ICHAOS), by the Agence Nationale de Recherche,
 project ANR-18-CE40-0035, and by the Russian Foundation for  Basic Research, grant 18-31-20031. B. S.'s research is supported by the Israel Science Foundation (ISF), grants 396/15 and 911/19.


\section{Preliminaries}

\subsection{Markov compacta}
The symbolic representation of translation flows was given by Bufetov \cite{Buf-umn}, using the theory of  Markov compacta. Here we briefly recall the definitions.

Denote by $\Frg$ the set of all oriented graphs on $m$ vertices such that there is an edge starting at every vertex and an edge terminating at every vertex (we allow loops and multiple edges).
For $\Gam\in \Frg$ let $\Ek(\Gam)$ be the set of edges of $\Gam$. For $e\in \Ek(\Gam)$ denote by $I(e)$ its initial vertex and by $F(e)$ its terminal vertex. Let $A(\Gam)$ be the incidence matrix of $\Gam$ given by
$$
A_{ij}(\Gam) = \#\{e\in \Ek(\Gam): I(e) = i, F(e) = j\}.
$$
Assume that we are given a sequence $\gf = \{\Gam_n\}_{n\in \Z}$, with $\Gam_n \in \Frg$. The {\em Markov compactum} of paths in the sequence of graphs $\gf$ is defined by
$$
X = X(\gf) = \{\ef = (\ef_n)_{n\in \Z}:\ \ef_n \in \Ek(\Gam_n),\ F(\ef_{n+1}) = I(\ef_n)\}.
$$
We will also need the one-sided Markov compacta $X_+$ (respectively $X_-$), defined in the same way with elements $(\ef_n)_{n\ge 1}$ (respectively $(\ef_n)_{n\le 0}$). A one-sided sequence of graphs in $\Frg$ is also called a {\em Bratteli diagram} of rank $m$.
For $\ef\in X,\ n\in \Z^m$ introduce the sets 
$$
\gam_n^+(\ef) = \{\ef'\in X:\ \ef_j'=\ef_j, j\ge n\}:\ \ \ 
\gam_n^-(\ef) = \{\ef'\in X:\ \ef_j'=\ef_j, j\le n\};
$$
$$
\gam_\infty^+(\ef) = \bigcup_{n\in \Z} \gam_n^+(\ef);\ \ \ \gam_\infty^-(\ef) = \bigcup_{n\in \Z} \gam_n^-(\ef).
$$
The sets $\gam_\infty^+(\ef)$ are leaves of the {\em asymptotic foliation} $\Fk^+(X)$ on $X$ corresponding to the infinite future, and the sets $\gam_\infty^-(\ef)$
are the leaves of the asymptotic foliation $\Fk^-(X)$ on $X$ corresponding to the infinite past.

There is a standard construction of {\em telescoping} (= aggregation) {\ \cite[2.4]{Buf-umn}}: for any sequence $1=n_0<n_1 < n_2< \cdots$ we ``concatenate'' the graphs
$\Gam_{n_j},\ldots, \Gam_{n_{j+1}-1}$ to obtain $\Gam'_j\in \Frg$. Telescoping induces an equivalence relation on $\Frg^\Z$, and the Markov compacta corresponding to equivalent $\gf$ and $\gf'$ are naturally isomorphic.

\medskip

\noindent
{\bf Standing Assumption.}  {\em The sequence $\{\Gam_n\}$ (after appropriate
telescoping) contains infinitely many occurrences of a single graph $\Gam$ with a strictly positive incidence matrix, both in the past and in the future. }

\medskip

In this case, as is
well-known since the work   of Furstenberg (see e.g.\ (16.13) in \cite{furst}), the 
Markov compactum $X$ is {\em uniquely ergodic}, which means that there are unique probability measures $\nu_+$, $\nu_-$, invariant under the equivalence relations defined by the future (past) asymptotic foliations respectively.

\subsection{Vershik's orderings and Vershik automorphisms}
The Markov compactum and the asymptotic foliations encode the translation surface and its vertical and horizontal foliations; however, in order to recover the translation flows themselves, one needs a linear ordering on the leaves of the foliation. This linear ordering is induced by Vershik's orderings of the edges of the graphs defining the Markov compactum.

Formally, following Ito \cite{Ito} and Vershik \cite{Vershik1,Vershik2}, we assume that a linear ordering (called {\em Vershik's ordering}) is given on the set $\{e\in \Ek(\Gam_n): I(e)=i\}$ for every graph $\G_n$ and for every vertex $i$. This induces a linear ordering on any leaf of the foliation $\Fk_X^+$. Indeed, if $\ef' \in \gam_\infty^+(\ef)$, $\ef'\ne \ef$, then there exists $n$ such that $\ef_j = \ef_j'$ for $j>n$ but $\ef_j \ne \ef_j'$. Since $I(\ef_n) = I(\ef_n')$, the edges $\ef_n$ and $\ef_n'$ are comparable with respect to our ordering; if $\ef_n < \ef_n'$, then we write $\ef< \ef'$.
Denote the resulting ordering on $\Fk_X^+$ by $\ok$.
Restricting this ordering to the 1-sided compactum $X_+$, we obtain the {\em adic}, or {\em Vershik automorphism} 
${\mathfrak T}$, defined as the immediate successor of a path $\ef$ in the ordering $\ok$. This is a $\Z$-action on the complement of the countable set consisting of the union of finitely many one-sided orbits of the maximal and minimal paths in the ordering $\ok$. In the literature, the Vershik automorphism is often called a {\em Bratteli-Vershik dynamical system}.
By Vershik's Theorem \cite{Vershik1,Vershik2}, every ergodic automorphism of a Lebesgue space is measurably isomorphic to the Vershik automorphism on a one-sided ordered Bratteli-Vershik diagram (in general, of infinite rank). A  realization of a (minimal) IET as a Vershik automorphism (similar to the one described below, see \eqref{induc}, via the Rauzy induction) was given by Gjerde and Johansen \cite{GJ}. On the other hand, given a Vershik's ordering on the 2-sided uniquely ergodic Markov compactum $X$, Bufetov \cite{Buf-umn}, defined a flow on $X$, which is isomorphic to a suspension flow over the Vershik automorphism, with a piecewise-constant roof function. This is the construction which yields the symbolic representation of translation flows.

\subsection{The space of Markov compacta and the renormalization cocycle} Let $\Om=\Frg^\Z$ be the space of bi-infinite sequences of graphs $\Gam_n \in \Frg$, with the left shift
$\sig$. We have a natural cocycle $\Aa$ over the dynamical system $(\Om,\sig)$ defined, for $n>0$ by the formula:
$$
\Aa(n,\gf) = A(\Gam_n)\ldots A(\Gam_1),\ \ \mbox{where}\ \gf = \{\Gam_n\}_{n\in \Z}\in \Om.
$$
Let $\Om_{inv}$ be the subset of all sequences $\gf$ such that all matrices $A(\Gam_n)$ are invertible. For $\gf\in \Om_{inv}$ and $n<0$ we set
$$
\Aa(n,\gf) = A^{-1}(\Gam_{-n})\cdots A^{-1}(\Gam_0),
$$
and let $\Aa(0,\gf)$ be the identity matrix.


\subsection{Substitutions and $S$-adic systems} \label{sec-Sadic}
Along with Markov compacta and Vershik automorphisms, it is convenient to use the language of {substitutions} (see \cite{Queff,Fogg} for background). Consider the
alphabet $\Ak$, and denote by $\Ak^+$ the set of finite (non-empty) words with letters in $\Ak$. A {\em substitution}  is a map $\zeta:\,\A\to \A^+$, extended to  $\A^+$ and $\A^{\N}$ by concatenation. 
The {\em substitution matrix} is defined by 
\be \label{sub-mat}
\Sf_\zeta (i,j) = \mbox{number of symbols}\ i\ \mbox{in the word}\ \zeta(j).
\ee
Denote by $\FrA$ the set of substitutions $\zeta$ on $\Ak$ with the property that all letters appear in the set of words $\{\zeta(a):\,a\in \Ak\}$ and there exists $a$ such that $|\zeta(a)|>1$.

Let $\ba^+ =(\zeta_n)_{n\ge 1}$ be a 1-sided sequence of substitutions on $\Ak$. Denote
$$
\zeta^{[n]} := \zeta_1\circ \cdots\circ\zeta_n,\ \ n\ge 1.
$$
Recall that $\Sf_{\zeta_1\circ \zeta_2} = \Sf_{\zeta_1}\Sf_{\zeta_2}$.
We will sometimes write
$$
\Sf_j:= \Sf_{\zeta_j}\ \ \mbox{and}\ \ \Sf^{[n]}:= \Sf_{\zeta^{[n]}}.
$$
We will also consider subwords of the sequence $\ba$ and the corresponding substitutions obtained by composition. Denote
\be \label{notation1}
\Sf_\bq = \Sf_n\cdots \Sf_\ell\ \ \mbox{and}\ \ A(\bq) = S_\bq^t\ \ \mbox{for}\ \ \bq = \zeta_{n}\ldots\zeta_{\ell}
\ee

Given $\ba^+$, denote by $X_{\ba^+}\subset \Ak^\Z$ the subspace of all two-sided sequence whose every subword appears as a subword of 
$\zeta^{[n]}(b)$ for some $b\in \Ak$ and $n\ge 1$. Let $T$ be the left shift on $\Ak^\Z$; then $(X_{\ba^+},T)$ is the (topological) $S$-adic dynamical system.
We refer to \cite{BD,BST_2019,BSTY} for the background on $S$-adic shifts.

There is a canonical correspondence between 1-sided Bratteli-Vershik diagrams with a Vershik's ordering, having $m$ vertices on each level, and sequences of substitutions $\ba^+=(\zeta_j)_{j\ge 1}$ on the alphabet 
$\Ak = \{0,\ldots,m-1\}$. For a stationary BV diagram (which corresponds to a single substitution), it was discovered by Livshits \cite{Liv1}. 
Given a Vershik's ordering $\ok$ on a BV diagram
$\{\Gam_{j}\}_{j\ge 1}$, the corresponding $S$-adic system is constructed as follows.  The
alphabet $\Ak = \{1,\ldots,m\}$ is identified with the vertex set of all the graphs $\Gam_n$.
The substitution $\zeta_j$ takes every $b\in \A$ into the word in $\A$ corresponding to all the vertices to which
there is a $\Gam_{j}$-edge starting at $b$, in the order determined by $\ok$. Formally, the {\em length} of the word $\zeta_j(b)$ is
$$
|\zeta_j(b)| = \sum_{a=1}^m A_{b,a}(\Gam_j),
$$
and the substitution itself is given by 
\be \label{def-subs1}
\zeta_j(b) =u_1^{b,j}\ldots u_{|\zeta_j(b)|}^{b,j},\ \ \ b\in \A,\ \ j\ge 0,
\ee
where $(b, u_i^{b,j})\in \Ek(\Gam_j)$, listed in the linear order prescribed by $\ok$. Note that the Standing Assumption on the sequence of graphs $\{\Gam_j\}$ implies the following:


\medskip

{\bf (A1)} {\em There  is a word $\bq\in \FrA^*$ which appears in $\ba^+$ infinitely often, for which $\Sf_\bq$ has all entries strictly positive.}
 
 \medskip
 
Property (A1) implies minimality and unique ergodicity of the $S$-adic shift, see \cite[Theorems 5.2 and 5.7]{BD};  the claim on unique ergodicity parallels that of Markov compactum and similarly goes back to Furstenberg \cite[(16.13)]{furst}.

We will also assume that the $S$-adic system is {\em aperiodic}, i.e., it has no periodic points. (A minimal system that has a periodic point, is a system on a finite space, and we want to exclude a trivial situation.) 

Further, we need the notion of  {\em recognizability} for the sequence of substitutions, introduced 
in \cite{BSTY}, which generalizes {\em bilateral recognizability} of B. Moss\'e \cite{Mosse} for a single substitution, see also Sections 5.5 and 5.6 in \cite{Queff}. By definition of the space $X_{\ba^+}$, for every $n\ge 1$, every $x\in X_{\ba^+}$ has a representation of the form
\be \label{recog}
x = T^k\bigl(\zeta^{[n]}(x')\bigr),\ \ \mbox{where}\ \ x'\in X_{\sig^n \ba^+},\ \ 0\le k < |\zeta^{[n]}(x_0)|.
\ee
Here $\sig$ denotes the left shift on $\FrA^\N$, and we recall that a substitution $\zeta$ acts on $\Ak^\Z$ by
$$
\zeta(\ldots a_{-1}.a_0 a_1\ldots) = \ldots \zeta(a_{-1}).\zeta(a_0)\zeta(a_1)\ldots
$$

\begin{defi} \label{def-recog}
A sequence of substitutions $\ba^+ = (\zeta_j)_{j\ge 1}$ is said to be {\em recognizable} if the representation (\ref{recog}) is unique for all $n\ge 1$.
\end{defi}

The following is a special case of \cite[Theorem 4.6]{BSTY} that we need.

\begin{theorem}[{\cite{BSTY}}] \label{th-recog0}
Let $\ba^+= (\zeta_j)_{j\ge 1} \in \FrA^\N$ be such that $\det(\Sf_{\zeta_j})\ne 0$ for every substitution matrix and $X_{\ba^+}$ is aperiodic. Then $\ba^+$ is recognizable.
\end{theorem}

\begin{theorem}[{\cite[Theorem 6.5]{BSTY}}] \label{thm-recog}
Let $\gf^+=\{\Gam_n\}_{n\ge 1}$ be a 1-sided BV-diagram equipped with a Vershik's ordering $\ok$, and let $\ba^+\in \FrA^\N$ be the corresponding sequence of substitutions.
If $\ba^+$ is recognizable, then the $S$-adic shift $(X_{\ba^+},T)$ is almost topologically conjugate, hence measurably conjugate in case the system is uniquely ergodic, to the 
Vershik automorphism on $X^+(\gf^+)$.
\end{theorem}


\subsection{Suspension flows and cylindrical functions} 
Condition (A1) implies that $(X_{\ba^+},T)$ is minimal and uniquely ergodic, and we
denote the unique invariant probability measure by $\mu_{\bom^+}$. 
 We further let
 $(\Xx_{\ba^+}^{\vec{s}}, h_t, \wt{\mu}_{\bom^+})$ be the suspension flow over $(X_{\ba^+},\mu_{\bom^+}, T)$,  corresponding to a piecewise-constant roof function $\phi$ defined by $\vec{s}\in \R^m_+$, that is,
$$
\phi(x) = s_{x_0},\ \ x\in X_{\ba^+}.
$$
The measure $\wt\mu_{\bom^+}$ is induced by the product of $\mu_{\bom^+}$ and the Lebesgue measure on $\R$.
 By definition, we have a union, disjoint in measure:
$$
\Xx_{\ba^+}^{\vec{s}} = \bigcup_{a\in \Ak} [a]\times [0,s_a],
$$
where $X_{\ba^+} = \bigsqcup_{a\in \Ak} [a] $ is the partition into cylinder sets according to the value of $x_0$. It is convenient to use the normalization:
$$
\vec{s} \in \Delta^{m-1}_\bom:= \Bigl\{\vec s\in \R^m_+: \ \sum_{a\in \Ak} \mu_{\bom^+}([a]) \cdot s_a = 1\Bigr\},
$$
so that $\wt \mu_{\bom^+}$ is a probability measure on $\Xx_{\ba^+}^{\vec{s}}$. Below we often omit the subscript and write $\mu = \mu_{\bom^+}$, $\wt\mu = \wt\mu_{\bom^+}$,
when it does not cause a confusion.

\medskip

Define {\em bounded cylindrical functions} (or {\em cylindrical functions of level zero}) by the formula: 
\be \label{fcyl2}
f(x,t)=\sum_{a\in \Ak} \One_{[a]}(x) \cdot \psi_a(t),\ \ \mbox{with}\ \ \psi_a\in L^\infty[0,s_a].
\ee
Cylindrical functions of level zero do not suffice to describe the spectral type of the flow; rather, we need functions depending on an arbitrary fixed number of symbols. 
We assume that the sequence of substitutions $\ba^+$ is recognizable, and say that $f$ is a  {\em bounded cylindrical function of level} $\ell$ if
\be \label{fcyl3}
f(x,t)=\sum_{a\in \Ak} \One_{\zeta^{[\ell]}[a]}(x) \cdot \psi^{(\ell)}_a(t),\ \ \mbox{with}\ \ \psi^{(\ell)}_a\in L^\infty [0,s^{(\ell)}_a],
\ee
where
$$
\vec{s}^{\,(\ell)}= (s^{(\ell)}_a)_{a\in \Ak}:= (\Sf^{[\ell]})^t \vec{s}.
$$
{This way of writing is convenient,  but does not stress explicit dependence on words in $\Ak^\ell$. In the case of IET, this representation corresponds to the $m$ intervals of the interval exchange obtained after $\ell$ steps of the Rauzy induction, where we do not renormalize the total length. Then the ``heights'' grow, as do the vectors $\vec s^{(\ell)}$ here, and the dependence of the function on the past from $1$ to $\ell-1$ is ``hidden'' in the functions $\psi^{(\ell)}_a$.}
The justification of the representation \eqref{fcyl3} requires recognizability of $\ba^+$, which implies that 
\be \label{KR}
\Pk_n = \{T^i(\zeta^{[n]}[a]):\ a\in \Ak,\ 0 \le i < |\zeta^{[n]}(a)|\}
\ee
is a sequence of Kakutani-Rokhlin partitions for $n\ge n_0(\ba)$, which generates the Borel $\sig$-algebra on the space $X_{\ba^+}$. We emphasize that, in general, $\zeta^{[n]}[a]$ may be a proper subset of $[\zeta^{[n]}(a)]$.

We have a union, disjoint in measure:
\be \label{ell-decom}
\Xx_{\ba^+}^{\vec{s}} = \bigcup_{a\in \Ak} \zeta^{[\ell]}[a]\times [0, s_a^{(\ell)}],
\ee
and so bounded cylindrical functions of level $\ell$ are well-defined by (\ref{fcyl3}).

{\
\subsection{Shift  for $S$-adic systems}

We next describe the relation between suspension flows over the recognizable uniquely ergodic $S$-adic system $(X_{\bom^+},T, \mu_{\bom^+})$ and the ``shifted'' one $(X_{\sig^\ell\bom^+}, T, \mu_{\sig^\ell\bom^+})$, which is analogous to the shift of a 2-sided Bratteli-Vershik diagram.}
Using the uniqueness of the representation (\ref{recog}) and the Kakutani-Rokhlin partitions (\ref{KR}), we obtain for $n\ge n_0$:
$$
\mu_{\bom^+}([a]) = \sum_{b\in \Ak}\Sf^{[n]}(a,b)\,\mu_{\bom^+}(\zeta^{[n]}[b]),\ \ a\in \Ak,
$$
hence
\be \label{eq-measure}
\vec{\mu}_0 = \Sf^{[n]}\vec{\mu}_n,\ \ \mbox{where}\ \ \vec{\mu}_n = \bigl(\mu_{\bom^+}(\zeta^{[n]}[b])\bigr)_{b\in \Ak}
\ee
is a column-vector. Observe also that 
\be \label{eq-measure2}
\vec{\mu}_n = \Sf_{n+1}\vec{\mu}_{n+1},\ \  n\ge n_0. 
\ee
It follows from the above that, for any $\ell\ge 1$, the suspension flow $(\Xx_{\ba^+}^{\vec{s}},\wt\mu_{\bom^+},h_t)$ is measurably isomorphic to the suspension flow over the system $(\zeta^{[\ell]}(X_{\sig^\ell\ba^+}), \zeta^{[\ell]}\circ T\circ (\zeta^{[\ell]})^{-1})$, with the induced measure, and a piecewise-constant roof function given by the vector
$
\vec{s}^{\,(\ell)}= (\Sf^{[\ell]})^t \vec{s}.
$
{\ Notice that $\langle \vec\mu_\ell, \vec s^{\ell}\rangle=1$. In the symbolic representation of translation flows, which we describe in detail below in Sections 2.9-2.11, this corresponds to the Rauzy-Veech induction, in which the intervals of the exchange get shorter and the ``roof'' higher. Since $(X_{\sig^\ell\bom^+}, \mu_{\sig^\ell\bom^+})$ is a probability space, we need to renormalize 
(to continue the analogy, to make the new base interval have unit length).
It is easy to see that 
$$
\bigl(\mu_{\sig^\ell \bom^+}([b])\bigr)_{b\in \Ak} = \frac{\vec\mu_\ell}{{\|\vec \mu_\ell\|}_1}.
$$
Thus $\vec s^{(\ell)}{\|\vec \mu_\ell\|}_1\in \Delta^{m-1}_{\sig^\ell\bom}$, and we obtain the following
\begin{lemma} \label{lem-shift}
The suspension flows $(\Xx_{\ba^+}^{\vec{s}},\wt{\mu}_{\bom^+},h_t)$ and $\bigl(\Xx_{\sig^\ell\ba^+}^{\vec{s}^{\ell}{\|\vec \mu_\ell\|}_1},\wt{\mu}_{\sig^\ell\bom^+},h_t\bigr)$ are  measurably isomorphic.
\end{lemma}

\subsection{Telescoping an $S$-adic sequence \cite[5.2]{BSTY}}
Given a sequence of substitutions $\ba^+ = (\zeta_n)_{n\ge 1}$ and an increasing sequence of integers $(n_k)_{k\ge 1}$ with $n_1=1$, we define the {\em telescoping} of $\ba^+$ along $(n_k)_{k\ge 1}$
to be the sequence of substitutions $\wt\ba^+ = (\wt \zeta_k)_{k\ge 1}$ where $\wt\zeta_k = \zeta_{n_k}\cdots \zeta_{n_{k+1}-1}$. It is immediate from the definitions that $X_{\ba^+}=X_{\wt\ba^+}$, so the
resulting dynamical systems are identical. This  is parallel to the operation of aggregation/telescoping for Bratteli-Vershik diagrams, mentioned in Section 2.1.
}


\subsection{Weakly Lipschitz functions.} \label{sec-weakl}
Following Bufetov \cite{Bufetov1, Buf-umn}, we consider the space of {\em weakly Lipschitz functions} on the space $\Xx_{\ba^+}^{\vec{s}}$, except here we do everything in the $S$-adic framework. This is the class of functions that we obtain from Lipschitz functions on the translation surface $M$ under the symbolic representation of the translation flow, for almost every Abelian 
differential.

\begin{defi}
Suppose that $\ba^+\in \FrA^\N$ is such that the $S$-adic system $(X_{\ba^+},T)$ is uniquely ergodic, with the invariant probability measure $\mu$. 
We say that a bounded function $f:\Xx_{\ba^+}^{\vec{s}}\to \C$ is weakly Lipschitz and write $f\in \Lip_w (\Xx_{\ba^+}^{\vec{s}})$
if there exists $C>0$ such that for all $a\in \Ak$ and $\ell\in \N$, for any $x,x'\in \zeta^{[\ell]}[a]$ and all $t\in [0, s_a^{(\ell)}]$, we have
\be \label{LL2}
|f(x,t)-f(x',t)|\le C \mu_{\ba^+}(\zeta^{[\ell]}[a]).
\ee
Here we are using the decomposition of $\Xx_{\ba^+}^{\vec{s}}$ from (\ref{ell-decom}).
The norm in $Lip_w  (\Xx_{\ba^+}^{\vec{s}})$ is defined by
\be \label{Lip-norm}
{\|f\|}_L:= {\|f\|}_\infty + \wtil{C},
\ee
where $\wtil{C}$ is the infimum of the constants in (\ref{LL2}). 
\end{defi}

Note that a  weakly Lipschitz functions is not assumed to be Lipschitz in the $t$-direction. This direction corresponds to the ``past'' in the 2-sided Markov compactum and to the vertical direction in the space of the suspension flow under the symbolic representation, and the reason is that any symbolic representation of a flow on a manifold unavoidably has discontinuities.

{\ Under the isomorphism of Lemma~\ref{lem-shift}, for any $\ell\ge 1$ a weakly Lipschitz functions on $\Xx_{\ba^+}^{\vec{s}}$ is mapped to a weakly Lipschitz function on
$\Xx_{\sig^\ell\ba^+}^{\vec{s}^{\ell}{\|\vec \mu_\ell\|}_1}$, and this map does not increase the norm $\|\cdot\|_L$. Similarly, telescoping the sequence $\ba^+$ does not increase the norm $\|\cdot\|_L$
of a weakly Lipschitz function.}

\begin{lemma} \label{lem-approx}
Let $f:\Xx_{\ba^+}^{\vec{s}}\to \C$  be a weakly Lipschitz function. Then for any $\ell\in \N$ there exists a bounded cylindrical function of level $\ell$, denote it $f^{(\ell)}$, such that
$\|f^{(\ell)}\|_\infty \le \|f\|_\infty$ and
$$
{\|f - f^{(\ell)}\|}_\infty \le  {\|f\|}_L\cdot \max_{a\in \Ak} \mu_{\bom^+}(\zeta^{[\ell]}[a]).
$$
\end{lemma}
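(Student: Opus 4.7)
The plan is to construct $f^{(\ell)}$ by sampling $f$ at one representative point in each level-$\ell$ cylinder. Concretely, for each $a\in\Ak$ fix an arbitrary $x_a\in\zeta^{[\ell]}[a]$ and set
$$
\psi^{(\ell)}_a(t):=f(x_a,t),\qquad t\in[0,s^{(\ell)}_a],
$$
and then define
$$
f^{(\ell)}(x,t):=\sum_{a\in\Ak}\One_{\zeta^{[\ell]}[a]}(x)\cdot\psi^{(\ell)}_a(t),
$$
which, by comparison with \eqref{fcyl3}, is a bounded cylindrical function of level $\ell$ (using the decomposition \eqref{ell-decom}, this is defined on all of $\Xx_{\ba^+}^{\vec s}$ up to a $\wt\mu$-null set). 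Since each $\psi^{(\ell)}_a$ is obtained as a value of $f$, we have $|\psi^{(\ell)}_a(t)|\le\|f\|_\infty$ and hence $\|f^{(\ell)}\|_\infty\le\|f\|_\infty$.

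For the approximation bound, fix any $(x,t)\in\Xx_{\ba^+}^{\vec s}$. By \eqref{ell-decom} there is a unique $a\in\Ak$ with $x\in\zeta^{[\ell]}[a]$ and $t\in[0,s^{(\ell)}_a]$. Both $x$ and $x_a$ lie in $\zeta^{[\ell]}[a]$, so the weak Lipschitz property \eqref{LL2} applied with the infimal constant $\wt C\le\|f\|_L$ (see \eqref{Lip-norm}) yields
$$
|f(x,t)-f^{(\ell)}(x,t)|=|f(x,t)-f(x_a,t)|\le\wt C\,\mu(\zeta^{[\ell]}[a])\le\|f\|_L\cdot\max_{b\in\Ak}\mu(\zeta^{[\ell]}[b]).
$$
Taking the supremum over $(x,t)$ gives the required inequality.

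There is essentially no obstacle here: the content of the lemma is just the observation that the weak Lipschitz condition is precisely designed so that sampling at one point per level-$\ell$ tower produces an $L^\infty$-approximation with the stated error. The only thing to check carefully is that the decomposition of $\Xx_{\ba^+}^{\vec s}$ into the sets $\zeta^{[\ell]}[a]\times[0,s^{(\ell)}_a]$ is legitimate for all sufficiently large $\ell$, which is precisely the recognizability statement provided by \cite[Theorem 3.1]{BSTY} under assumptions (A1)--(A2); for small $\ell$ where recognizability has not yet kicked in, one may instead take $f^{(\ell)}:=f^{(n_0)}$, or simply state the lemma for $\ell\ge n_0(\ba)$, which suffices for later use.
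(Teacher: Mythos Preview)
Your proof is correct and follows essentially the same approach as the paper: choose a representative point $x_a\in\zeta^{[\ell]}[a]$ for each $a$, define $f^{(\ell)}$ by sampling $f$ there, and apply the weak Lipschitz bound \eqref{LL2}. Your write-up is in fact more detailed than the paper's, which simply states that ``by definition, the function $f^{(\ell)}$ has all the required properties''; your closing remark about recognizability for small $\ell$ is a useful caveat that the paper omits.
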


\begin{proof}
We use the decomposition (\ref{ell-decom}). For each $a\in \Ak$ and $\ell$ choose $x_{a,\ell}\in \zeta^{[\ell]}$ arbitrarily, and let
$$
f^{(\ell)}(x,t):= f(x_{a,\ell},t),\ \ \mbox{where}\ \ x_0=a,\ t\in [0, s_a^{(\ell)}].
$$
By definition, the function $f^{(\ell)}$ has all the required properties.
\end{proof}


\subsection{Interval exchange transformations and suspensions over them} \label{subsec-IET}

We recall briefly the  connection between translation flows and {\em interval exchange transformations}, discovered by Veech \cite{Veech0,
veech}; for more details see, e.g., the surveys by Viana \cite{viana2}, Yoccoz \cite{yoccoz}, and Zorich \cite{Zorich1}. Let  $\pi\in \Frs_m$ be a permutation of $\{1,\ldots,m\}$ for $m\ge 2$.
An interval exchange transformation (IET) $f(\lam,\pi)$ is determined by $\pi$ and a positive vector $\lam = (\lam_1,\ldots,\lam_m)\in \R^m_+$. It is a piecewise isometry on the interval
$I = \bigcup_{j=1}^m I_j$, where $I_j = \Bigl[\sum_{k<j}\lam_k, \sum_{k\le j} \lam_j\Bigr)$ for $j=1,\ldots,m$, in which the intervals $I_j$ are translated and exchanged according to the permutation $\pi$.
To be precise,
$$
f(\lam,\pi):\ x\mapsto x + \sum_{\pi(j) < \pi(i)} \lam_j - \sum_{j < i} \lam_j,\ \ \ x\in I_i,
$$
which means that {\em after the exchange} the interval $I_j$ is in the $\pi(j)$-the place. Here we are using the convention of Veech; some authors use different notation.
We assume that $\pi$ is irreducible, i.e., $\pi\{1,\ldots,k\}\ne \{1,\ldots,k\}$ for $k<m$.

For $m=2$ the IET is just a circle rotation (modulo identification of the endpoints of $I$), and it can be viewed as the first return map of a linear flow on a torus $\T^2$. Similarly, for $m\ge 3$ by a singular suspension (with a piecewise-constant roof function, constant on each subinterval $I_i$), the IET can be represented as a first return map of a translation flow on a suitable translation surface to a specially chosen Poincar\'e section, a line segment $I$, see \cite{Veech0,veech}. Conversely, 
given a translation surface, one can find a horizontal segment $I$ in such a way that the first return map of the vertical flow to $I$ is an IET. Precise connection between the two systems is given by the {\em zippered rectangles construction} of Veech \cite{veech}.

\subsection{Rauzy-Veech-Zorich induction and the corresponding cocycles} \label{subsec-Rauzy}
A fundamental tool in the study of IET's and translation flows is the Rauzy-Veech induction, introduced   in \cite{Veech0,Rauzy}. 
Let $\pi\in \Frs_m$, and suppose that $(\lam,\pi)$ is such that $\lam_m \ne \lam_{\pi^{-1}(m)}$. Then the first return map of $f(\lam,\pi)$ to the interval
$
\bigl[0, \sum_{i=1}^m \lam_i - \min\{\lam_{\pi^{-1}(m)},\lam_m\} \bigr)
$
is an  IET on $m$ intervals $f(\lam',\pi')$ as well, see, e.g., \cite{viana2,yoccoz}. This defines a map
$$
\Qk_R:\ \R^m_+\times \Frs_m \to \R^m_+\times \Frs_m,\ \ \ \Qk_R(\lam,\pi) = (\lam',\pi'),
$$
on a set of full Lebesgue measure. Moreover, if $\pi$ is irreducible, then $\pi'$ is irreducible as well.
If $\lam_m < \lam_{\pi^{-1}(m)}$, we say that this is an operation of type ``a''; otherwise, an operation of type ``b''. The {\em Rauzy graph} is a directed labeled graph, whose vertices are  permutations of $\Ak=\{1,\ldots,m\}$ and the edges lead to permutations obtained by applying one of the operations. The edges are labeled ``a'' or ``b'' depending on the type of the operation. The {\em Rauzy class} of a permutation $\pi$ is the set of all permutations that can be reached from $\pi$ following a path in the Rauzy graph. 
For almost every IET (with respect to the Lebesgue measure on $\R^m_+$), the algorithm is well-defined for all times into the future, 
and we obtain an infinite path in the Rauzy graph, corresponding to the IET. Veech \cite{veech} proved that, conversely, every infinite path in the Rauzy graph arises from an IET in a such a way.

In the ergodic theory of IET's it is useful to consider an {\em acceleration} of the algorithm. Zorich induction \cite{Zorich,Zorich1} is obtained by applying the Rauzy-Veech induction  until the first switch from a type ``a'' to a type ``b'' operation, or vice versa.
Sometimes other versions of the algorithm and accelerations are used, see, e.g., Marmi, Moussa, and Yoccoz \cite{MMY}.

Let $(\lam',\pi')=\Qk_R(\lam,\pi)$, and denote $f_I=f(\lam,\pi)$. Write $I_j = I_j(\lam,\pi)$ and let $J_j = I_j(\lam',\pi')$ be the intervals of the exchange $f_J=f(\lam',\pi')$. Denote by 
$r_i$ the return time for the interval $J_i$ into $J$ under $f_I$, that is,
$r_i = \min\{k>0: f_I^k (J_i) \subset J\}$. From the definition of the induction procedure it follows that $r_i = 1$ for all $i$ except one, for which it is equal to 2. 
Represent $I$ as a Rokhlin tower over the subinterval $J$ and its induced map $f_J$, and write
\be \label{Rokhlin1}
I = \bigsqcup_{i=1,\ldots,m,\ k = 0, \ldots, r_i-1} f_I^k (J_i).
\ee
By construction, each of the ``floors'' of our tower, that is, each of the subintervals $f_I^k (J_i)$ is a subset of a unique subinterval of the initial exchange, and we define an integer $n(i,k)$ by the formula
\be \label{Rokhlin2}
f_I^j (J_i) \subset I_{n(i,k)}.
\ee
Let $B_R(\lam,\pi)$ be the linear operator on $\R^m$ given by the $m\times m$ matrix $[n(i,j)]$. This matrix is easily shown to be unimodular. Given a Rauzy class $\FrR$, the function $B_R: \R^m_+\times \FrR \to GL(m,\R)$ yields the {\em Rauzy-Veech}, or {\em renormalization} cocycle. If, instead, we apply the Zorich induction algorithm, the same procedure yields the {\em Zorich cocycle}. 

One can consider the Rauzy-Veech and Zorich  induction algorithm also on the set of zippered rectangles; these can be represented as bi-infinite paths in the Rauzy graph. After an appropriate renormalization, the Rauzy-Veech map $(\lam,\pi)\mapsto (\lam',\pi')$ and the Zorich map $(\lam,\pi) \mapsto (\lam'',\pi'')$ can be seen as the first return maps of the Teichm\"uller flow on the space of zippered rectangles, with respect to carefully chosen Poincar\'e sections, see, e.g., \cite[Section 2.10]{viana2} and \cite[Section 11.3]{yoccoz}.

\medskip

\indent
{\bf Remark.} The zippered rectangles construction provides natural bases for the absolute and relative homology groups $H_1(M\setminus \Sig,\R)$ and $H_1(M,\Sig,\R)$; in particular, $\R^m$ may be identified with  $H_1(M,\Sig,\R)$. The Rauzy-Veech cocycle can then be represented as acting on the cohomology groups 
$H^1(M\setminus \Sig,\R)$ and $H^1(M,\Sig,\R)$, as shown by Veech \cite{veech} (see also \cite[Section 2.9]{viana2}).


\subsection{Symbolic representation of IET's and translation flows} \label{subsec-symbol}

Let $\Hk$ be a connected component of a stratum of genus $g\ge 2$ and $\FrR$ the Rauzy class of a permutation $\pi\in \Frs_m$ corresponding to $\Hk$. It is known that $m\ge 2g$.
Veech \cite{veech}
constructed a measurable  map from the space $\Vk(\FrR)$ of
zippered rectangles corresponding to the Rauzy class $\FrR$, to $\Hk$, which intertwines the Teichm\"uller flow on $\Hk$ and a renormalization flow $P_t$ that Veech defined on $\Vk(\FrR)$. This  translation flow on the flat surface is measurably isomorphic to the suspension flow over an IET in the Rauzy class $\FrR$. The roof function of the suspension is constant on each interval of the exchange and can therefore be expressed as a vector of ``heights''.
It was shown by Veech \cite{veechamj} that the ``vector of heights'' obtained in this construction necessarily belongs to a subspace $H(\pi)$, which is invariant under the Rauzy-Veech cocycle and depends only on the permutation of the IET. In fact, the subspace $H(\pi)$ has dimension $2g$ and is the sum of the stable and unstable subspaces for the Rauzy-Veech cocycle.

Section 4.3 of \cite{Buf-umn}  gives a symbolic coding of the flow $P_t$ on $\Vk(\FrR)$, namely, a map
\be \label{map-veech}
\Zk_{\FrR}: (\Vk(\FrR),\wt\nu) \to (\Om,\P)
\ee
defined almost everywhere, where $\wt\nu$ is the pull-back of  $\nu$ from $\Hk$, an invariant and ergodic map under the Teichm\"uller flow on
$\Hk$ and $(\Om,\P)$ is the space of Markov compacta with a probability measure $\P$.
The first return map of the flow $P_t$ for an appropriate Poincar\'e section is mapped by $\Zk_\FrR$ to the shift map $\sigma$ on $(\Om,\P)$. This correspondence maps the Rauzy-Veech cocycle over the Teichm\"uller flow into the renormalization cocycle for the Markov compacta. Moreover, the Markov compacta in the image of $\Zk_{\FrR}$ are equipped with a
canonical Vershik's ordering.
The transformation $\Zk_\Rk$ induces a map defined for a.e.\ $\Xk\in \Vk(\Rk)$, from the corresponding Riemann surface $M(\Xk)$ to a 2-sided Markov compactum $X$ with the Vershik's ordering, intertwining their vertical and horizontal flows. 

The vertical flow on $X$ is, in turn, measurably isomorphic to the suspension flow over the one-sided Markov compactum $X_+$. 
The Vershik automorphism on $X_+$ provides a symbolic representation of the IET from $\Rk$ on $m\ge 2g$ symbols.

{ In brief, the map $\Zk_\Rk$ is defined as follows. Identify an element of $\Vk(\FrR)$ with a suspension over an IET $(\lam,\pi)$.
We then run the 2-sided Rauzy-Veech induction, equivalently, consider a bi-infinite path in the Rauzy diagram, which is well-defined a.s., and take
$\Gam_n$ to be the graph whose incidence matrix is the matrix of the linear operator $B_R(\Qk^{n-1}_R(\lam,\pi))$ in the standard basis, see (\ref{Rokhlin1}), (\ref{Rokhlin2}). 

In this paper we are going to use the framework of $S$-adic systems. The justification for transition from the Bratteli-Vershik coding of \cite{Buf-umn} to the $S$-adic coding is provided by Theorems~\ref{thm-recog} and \ref{th-recog0}, in view of the fact that the matrices of the Rauzy-Veech cocycle are unimodular, see \cite{veech,veechamj}. In addition, note that the $S$-adic system is aperiodic, since the number of admissible ``words'' of length $n$ in the Rauzy graph is unbounded as $n\to \infty$.
The substitution $\zeta_1$ in the resulting symbolic representation can be ``read off'' the Rokhlin tower (\ref{Rokhlin1}), (\ref{Rokhlin2}) of one step of the Rauzy-Veech induction:
\be \label{induc}
\zeta_1:\ i \mapsto n(i,0)\ldots n(i, r_1-1),\ \ i=1,\ldots,m.
\ee
Thus we obtain
$$B_R(\lam,\pi) = [n(i,j)]_{i,j=1}^m = \Sf_{\zeta_1}^t.$$
Note that he property (A1) of the resulting sequence of substitutions
holds almost surely, see Veech \cite{veech}.}


\subsection{Spectral measures and twisted Birkhoff integrals} \label{sec-twist}
We use the following convention for the Fourier transform of functions and measures: given $\psi\in L^1(\R)$ we set
$
\widehat{\psi}(t) = \int_\R e^{-2\pi i \om t} \psi(\om)\,d\om,
$
and for a  probability measure $\nu$ on $\R$ we let
$
\widehat{\nu}(t) = \int_\R e^{-2\pi i \om t}\,d\nu(\om).
$

Given a measure-preserving flow
$(Y, h_t,\mu)_{t\in \R}$ and a test function $f\in L^2(Y,\mu)$, 
there is a finite positive Borel measure $\sig_f$ on $\R$ such that
$$
\widehat{\sig}_f(-\tau)=\int_{-\infty}^\infty e^{2 \pi i\om \tau}\,d\sig_f(\om) = \langle f\circ h_\tau, f\rangle\ \ \ \mbox{for}\ \tau\in \R.
$$
In order to obtain local bounds on the spectral measure, we use growth estimates of the twisted Birkhoff integral
\be \label{twist1}
S_R^{(y)}(f,\om) := \int_0^R e^{-2\pi i \om \tau} f\circ h_\tau(y)\,d\tau.
\ee
The following lemma is standard; a proof may be found in \cite[Lemma 4.3]{BuSol2}.

\begin{lemma} \label{lem-varr} Suppose that for some fixed $\om \in \R$, $R_0>0$, and $\alpha \in (0,1)$ we have
\be \label{L2est}
\left\|S_R^{(y)}(f,\om)\right\|_{L^2(Y,\mu)}\le C_1R^\alpha\ \ \mbox{for all}\ R\ge R_0.
\ee
Then 
\be \label{locest}
\sig_f([\om-r,\om+r]) \le \pi^2 2^{-2\alpha} C_1^2 r^{2(1-\alpha)}\ \ \mbox{for all}\ r \le (2R_0)^{-1}.
\ee
\end{lemma}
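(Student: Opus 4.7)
The plan is to relate the $L^2$-norm of the twisted Birkhoff integral to the spectral measure via a Fej\'er-type kernel, and then bound the kernel from below on a short interval around $\om$.

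First I would expand the square: using Fubini and the invariance of $\mu$,
\[
\bigl\|S_R^{(y)}(f,\om)\bigr\|^2_{L^2(Y,\mu)} = \int_0^R\!\!\int_0^R e^{-2\pi i \om(\tau-\tau')} \langle f\circ h_\tau, f\circ h_{\tau'}\rangle\, d\tau\, d\tau'.
\]
Since $\langle f\circ h_\tau, f\circ h_{\tau'}\rangle = \langle f\circ h_{\tau-\tau'}, f\rangle = \widehat{\sig}_f(-(\tau-\tau'))=\int_\R e^{2\pi i \eta(\tau-\tau')}\,d\sig_f(\eta)$, interchanging the integrals gives
\[
\bigl\|S_R^{(y)}(f,\om)\bigr\|^2_{L^2(Y,\mu)} = \int_\R F_R(\eta-\om)\,d\sig_f(\eta),\qquad F_R(x):= \frac{\sin^2(\pi R x)}{\pi^2 x^2}.
\]

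Next, pick $R=1/(2r)$, which by hypothesis is admissible since $r\le (2R_0)^{-1}$ forces $R\ge R_0$. For $\eta\in[\om-r,\om+r]$ one has $|\pi R(\eta-\om)|\le \pi/2$, so the elementary inequality $\sin u\ge (2/\pi) u$ on $[0,\pi/2]$ gives
\[
F_R(\eta-\om) \ge \frac{(2R(\eta-\om))^2}{\pi^2(\eta-\om)^2} = \frac{4R^2}{\pi^2}.
\]
Restricting the integral to $[\om-r,\om+r]$ and combining with the hypothesis (\ref{L2est}) then yields
\[
\frac{4R^2}{\pi^2}\cdot \sig_f([\om-r,\om+r]) \le C_1^2 R^{2\alpha},
\]
i.e.\ $\sig_f([\om-r,\om+r]) \le \tfrac{\pi^2}{4} C_1^2 R^{2\alpha-2}$. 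Substituting $R=1/(2r)$ gives the stated bound $\pi^2 2^{-2\alpha} C_1^2 r^{2(1-\alpha)}$.

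There is no real obstacle here; the two ingredients (the Fej\'er kernel identity and the elementary sine lower bound) are both standard, and the only thing to watch is the bookkeeping of the constant and the admissibility range $R\ge R_0 \Leftrightarrow r\le (2R_0)^{-1}$.
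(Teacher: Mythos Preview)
Your argument is correct and is exactly the standard one: the paper does not supply its own proof but refers to \cite[Lemma~4.3]{BuSol2}, where the same Fej\'er-kernel identity and the lower bound $\sin u\ge (2/\pi)u$ on $[0,\pi/2]$ with the choice $R=1/(2r)$ are used. The bookkeeping of constants you carried out reproduces the stated bound $\pi^2 2^{-2\alpha} C_1^2 r^{2(1-\alpha)}$ precisely.
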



\section{Random $S$-adic systems: statement of the theorem}

Here we consider dynamical systems generated by a {\em random} $S$-adic system. In order to state our result, we need some preparation; specifically, the Oseledets Theorem.

Recall that $\FrA$ denotes the set of substitutions $\zeta$ on $\Ak$ with the property that all letters appear in the set of words $\{\zeta(a):\,a\in \Ak\}$ and there exists $a$ such that $|\zeta(a)|>1$.
Let $\Om$ be the 2-sided space of sequences of substitutions:
$$
\Om = \{\bom = \ldots \zeta_{-n}\ldots\zeta_0{\mbox{\bf .}}\zeta_1\ldots \zeta_n\ldots;\ \zeta_i \in \FrA,\ i\in \Z\},
$$
equipped with the left shift $\sig$.
For $\bom\in\Om$ we denote by $X_{\bom^+}$  the $S$-adic system corresponding to $\bom^+=\{\zeta_n\}_{n\ge 1}$.
 
We will sometimes write $\zeta(q)$ for the substitution corresponding to $q\in \FrA$. For a word $\bq = q_1\ldots q_k\in \FrA^k$ we can compose the substitutions to obtain
 $\zeta(\bq) = \zeta(q_1)\ldots \zeta(q_k)$.
 We will also need a ``2-sided cylinder set'':
$$
[\bq.\bq] = \{\bom\in \Om:\ \zeta_{-k+1}\ldots\zeta_0 = \zeta_1\ldots\zeta_k = \bq\}.
$$
Following \cite{BufGur}, we say that the word $\bq = q_1\ldots q_k$ is ``simple'' if for all $2 \le i \le k$ we have $q_i\ldots q_k \ne q_1\ldots q_{k-i+1}$. If the word $\bq$ is simple, two occurrences of $\bq$ in the sequence
$\bom$ cannot overlap. 

\begin{defi}
A word $v\in \Ak^*$ is a {\em return word} for a substitution $\zeta$ if $v$ starts with some letter $c$ and $vc$ occurs in the substitution space $X_\zeta$. The return word is called ``good'' if $vc$ occurs in the substitution $\zeta(j)$ of every letter. We denote by $GR(\zeta)$ the set of good return words for $\zeta$.
\end{defi}

Recall that
$
\AA(\bom) := \Sf^t_{\zeta_1}.
$
Let $\P$ be an ergodic $\sig$-invariant probability measure on $\Om$ satisfying the following 

\medskip

\noindent
{\bf Conditions:} 

{\bf (C1)} The matrices $\AA(\bom)$ are almost surely invertible with respect to $\P$.

{\bf (C2)} The functions $\bom\mapsto \log(1+ \|A^{\pm 1}(\bom)\|)$ are integrable. 

\noindent We can use any matrix norm, but it will be convenient the $\ell^\infty$ operator norm, so 
$\|A\|=\|A\|_\infty$ unless otherwise stated.

\medskip

We obtain a measurable cocycle $\AA: \Om\to GL(m,\R)$, called the {\em renormalization cocycle}. Denote
\begin{equation}\label{renormcoc}
\AA(n,\bom) = \left\{ \begin{array}{lr} \AA(\sig^{n-1}\bom)\cdot \ldots \cdot \AA(\bom), & n> 0; \\
                                                     Id, & n=0; \\
                                                      \AA^{-1}(\sig^{-n}\bom) \cdot \ldots \cdot \AA^{-1}(\sig^{-1}\bom), & n<0,\end{array} \right.
\end{equation}
so that 
$$\AA(n,\bom) = \Sf^t(\bom_n\ldots \bom_1)= \Sf^t_{\zeta_1\ldots\zeta_n},\ n\ge 1.$$
By the Oseledets Theorem \cite{oseledets} (for a detailed survey and refinements, see Barreira-Pesin \cite[Theorem 3.5.5]{barpes}),
there exist Lyapunov exponents $\theta_1> \theta_2 > \ldots > \theta_r$ and, for $\P$-a.e.\ $\bom\in \Om$, a direct-sum decomposition
\be \label{os1}
\R^m = E^1_\bom \oplus \cdots \oplus E^r_\bom 
\ee
that depends measurably on $\bom\in \Om$ and satisfies the following:

(i) for $\P$-a.e.\ $\bom\in \Om$, any $n\in\Z$, and any $i=1,\ldots,r$ we have
$$
\AA(n,\bom) E^i_\bom = E^i_{\sig^n\bom};
$$

(ii) for any $v\in E^i_\bom,\ v\ne 0$, we have
$$
\lim_{|n|\to \infty} \frac{\log\|\AA(n,\bom)v\|}{n} = \theta_i,
$$
where the convergence is uniform on the unit sphere $\{u\in E^i_\bom:\, \|u\|=1\}$;

\smallskip

(iii) $\lim_{|n|\to \infty} \frac{1}{n}\log\angle\left(\bigoplus_{i\in I} E^i_{\sig^n\bom}, \bigoplus_{j\in J} E^j_{\sig^n \bom}\right) =0$ whenever $I\cap J = \es$.

\medskip

We will denote by $E_\bom^{u}=\bigoplus\{E^i_\bom:\theta_i>0\}$ and $E_\bom^{st}=\bigoplus\{E^i_\bom:\theta_i<0\}$ respectively  the strong unstable and stable subspaces corresponding to $\bom$. Any subspace of the form $E_\bom^J:=\bigoplus_{j\in J} E^j_\bom$ will be called an Oseledets subspace corresponding to $\bom$.

\medskip

Let $\sig_f$ be the spectral measure for the system $(\Xx_{\bom^+}^{\vec{s}},h_t,\wt\mu_{\bom^+})$ with the test function $f$ (assuming the system is uniquely ergodic). 
{\ We will use the following notation: for a word $v$ in the alphabet $\Ak$ denote by $\vec{\ell}(v)\in \Z^m$ the positive vector whose $j$-th entry is the number of $j$'s in $v$, for $j\le m$,
and call it the  {\em population vector} of $v$.}
Now we can state our theorem.

\begin{theorem} \label{th-main1}
Let $(\Om,\P,\sig)$ be an invertible ergodic measure-preserving system satisfying conditions {\bf (C1)-(C2)} above. Consider the cocycle $\AA(n,\bom)$ defined by (\ref{renormcoc}).  Assume that
\begin{enumerate}
\item[(a)] 
there are $\kappa\ge 2$ positive Lyapunov exponents
and the top exponent is simple;
\item[(b)] there exists a simple {\ remove ``admissible''} word $\bq\in \FrA^k$ for some $k\in \N$, such that all the entries of the matrix $\Sf_\bq$ are strictly positive and $\P(\bqcyl)>0$;
\item[(c)]
the set of vectors $\{\vec\ell(v):\ v\ \mbox{is a good
return word for $\zeta$}\}$ generates $\Z^m$ as a free Abelian group;
\item[(d)]
Let $\ell_\bq(\bom)$ be the ``negative'' waiting time until the first appearance of $\bq.\bq$, i.e.
$$
\ell_\bq(\bom) = \min\{n\ge 1:\, \sig^{-n}\bom \in \bqcyl\}.
$$
Let $\P(\bom|\bom^+)$ be the conditional distribution on the set of $\bom$'s
conditioned on the future $\bom^+ = \bom_1\bom_2\ldots$ We assume that there exist $\eps>0$ and $1<C<\infty$ such that
\be \label{eq-star1}
\int_{\bqcyl} \left\|\AA(\ell_\bq(\bom), \sig^{-\ell_\bq(\bom)}\bom)\right\|^\eps\,d\P(\bom|\bom^+) \le C\ \ \ \mbox{for all}\ \ \bom^+\  \mbox{starting with}\ \q.
\ee
\end{enumerate}
Then there exists $\gam>0$  such that for $\P$-a.e.\ $\bom \in \Om$ the following holds:

Let $(X_{\bom^+},T,\mu_{\bom^+})$ be 
the $S$-adic system corresponding to $\bom^+$, which is uniquely ergodic.
 Let $(\Xx_{\bom^+}^{\vec{s}},h_t,\wt \mu_{\bom^+})$ be the suspension flow over $(X_{\bom^+},T,\mu_{\bom^+})$ under the piecewise-constant roof function determined by $\vec{s}$.
Let $H^J_\bom$ be an Oseledets subspace corresponding to $\bom$, such that $E^u_\bom\subset H^J_\bom$. Then 
for Lebesgue-a.e.\ $\vec{s}\in H^J_\bom\cap \Delta_\bom^{m-1}$,
for all
   $B>1$ there exist $R_0 = R_0(\bom,\vec s, B)>1$ and $r_0=r_0(\bom, \vec s, B)>0$
 such that for any $f\in \Lip_w^+(\Xx_{\bom^+}^{\vec{s}})$, 
\be \label{int-growth1}
|S^{(x,t)}_R(f,\om)| \le \wtil{C}(\bom,\|f\|_L)\cdot R^{1-\gam/2},\ \ \mbox{for all}\ \om\in[B^{-1},B]\ \mbox{and}\ R\ge R_0,
\ee
uniformly in $(x,t)\in \Xx_{\bom^+}^{\vec{s}}$, and 
\be \label{main-Hoeld}
\sig_f(B(\om,r))\le C(\bom,\|f\|_L)\cdot r^\gam,\ \ \ \mbox{for all}\ \ \om\in[B^{-1},B]\ \mbox{and}\ 0<r< r_0,
\ee
with the constants depending only on $\bom$ and $\|f\|_L$.

More precisely, for any $\epsilon_1>0$ there exists $\gam(\epsilon_1)>0$, such that for $\P$-a.e.\ $\bom \in \Om$  there is an exceptional set $\Fre(\bom,\epsilon_1)\subset H_\bom^J\cap \Delta_\bom^{m-1}$, satisfying
\be \label{dimesta}
\dim_H(\Fre(\bom,\epsilon_1)) < \dim(H^J_\bom)-\kappa+ \epsilon_1,
\ee
such that (\ref{int-growth1}) and (\ref{main-Hoeld}) hold for all $\vec s\in H^J_\bom \cap \Delta_\bom^{m-1}\setminus \Fre(\bom,\eps_1)$ with $\gam = \gam(\epsilon_1)$.
\end{theorem}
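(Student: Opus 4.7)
The plan is to follow the scheme of \cite{BuSo18a}, replacing its scalar Erdős–Kahane analysis by the vector version announced in the introduction. First I would observe that \eqref{main-Hoeld} follows from \eqref{int-growth1} by a direct application of Lemma~\ref{lem-varr} (with $\alpha = 1-\gam/2$), so the main task is the uniform Birkhoff bound \eqref{int-growth1}, together with the Hausdorff-dimension estimate \eqref{dimesta} for the exceptional set in $\vec s$.

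To estimate $S_R^{(x,t)}(f,\om)$ I would approximate a weakly Lipschitz $f$ by a bounded cylindrical function $f^{(\ell)}$ of level $\ell$ via Lemma~\ref{lem-approx}, choosing $\ell = \ell(R)$ so that the Kakutani–Rokhlin towers \eqref{KR} have height comparable to $R$; positivity of the simple top exponent $\th_1$ makes the approximation error negligible relative to $R^{1-\gam/2}$. On each level-$\ell$ tower the twisted integral splits into telescoping sums over deeper sub-towers, and this is exactly the situation to which the generalized matrix Riesz product estimates of \cite[Section 3]{BuSo18a} apply. Their input is a quantitative non-resonance property of the form
\[
\#\bigl\{n\le N : \|\AA(n,\bom)\cdot \om\vec s\|_{\R^m/\Z^m}\ge \varrho\bigr\}\ \ge\ c\,N,
\]
uniformly in $\om\in[B^{-1},B]$, with constants $\varrho,c$ depending only on $(\bom,\vec s,B)$. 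Condition~(d), i.e.\ the uniform exponential moment \eqref{eq-star1} of $\|\AA\|^\eps$ over returns to $[\bq.\bq]$, supplies a subexponential control of the renormalization cocycle along the return times and (by a Borel–Cantelli style argument, as in \cite[Prop.\,11.3]{BuSo18a}) guarantees that $[\bq.\bq]$ is visited with positive density in $n$.

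The core of the argument is the vector Erdős–Kahane step. Rather than tracking individual Oseledets coordinates of $\vec s$, I would project $\AA(n,\bom)\vec s$ onto the strong unstable subspace $E^u_{\sig^n\bom}$ and measure the distance of the resulting $\kappa$-dimensional vector to $\Z^m$. If this distance is small at some time $n$, then at the next appearance of $\bq.\bq$ the positivity and simplicity of the top exponent, together with assumption~(c) that good-return vectors generate $\Z^m$, force the projection to lie near one of only boundedly many lattice translates. Iterating this contraction along a long orbit produces, at truncation $N$, a covering of the bad-$\vec s$ locus by affine slabs of codimension $\kappa$ in $H^J_\bom$ whose transverse thickness decays exponentially in $N$. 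Summing the Hausdorff content over these covers — the vector analogue of \cite[Lemma 6.2]{Forni2} — yields the exceptional set $\Fre(\bom,\eps_1)$ with $\Dh(\Fre(\bom,\eps_1))\le \dim(H^J_\bom)-\kappa+\eps_1$, and produces a quantitative $\gam=\gam(\eps_1)>0$ from the density $c$ and the resonance threshold $\varrho$.

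The main obstacle I anticipate is the uniformity in $\om\in[B^{-1},B]$: in the scalar setting $\om$ could be absorbed into a rescaling of $\vec s$, but here $\om\vec s$ traces a curve in $H^J_\bom$ and the non-resonance set genuinely depends on $\om$. I would handle this by discretizing $[B^{-1},B]$ at scale comparable to the final Hölder exponent and applying a union bound, using the Lipschitz dependence of $\AA(n,\bom)\om\vec s$ in $\om$ to interpolate between grid points. A secondary difficulty is that the waiting times $\ell_\bq(\bom)$ between successive occurrences of $[\bq.\bq]$ must satisfy strong enough moment bounds for the covering sums to converge uniformly; the hypothesis \eqref{eq-star1} is designed precisely for this, and combined with the simple-word assumption in~(b) it lets the Bufetov–Gurevich framework \cite{BufGur} turn the random returns into an almost-renewal structure along which the Erdős–Kahane counting can be carried out.
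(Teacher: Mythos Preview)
Your overall architecture matches the paper's: reduce \eqref{main-Hoeld} to \eqref{int-growth1} via Lemma~\ref{lem-varr}, approximate by level-$\ell$ cylindrical functions, invoke the matrix Riesz product bound of \cite[Section~3]{BuSo18a}, and then run a vector Erd\H{o}s--Kahane argument to bound the dimension of the bad set of $\vec s$. But several of the hypotheses are doing different work from what you describe, and your description of the core step is inverted in a way that would not go through as written.

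First, the paper begins with a reduction you omit: it induces on $[\bq.\bq]$ and passes to Theorem~\ref{th-main11}, so that every $\bom_n=\bq\bp_n\bq$ and one never has to track return times to $[\bq.\bq]$ during the Erd\H{o}s--Kahane step. Positive density of visits to $[\bq.\bq]$ is automatic from ergodicity and $\P([\bq.\bq])>0$; condition~(d) is \emph{not} used for that. Its role (via Proposition~\ref{prop-proba} and Corollary~\ref{cor-immed}) is the exponential-tail bound $\sum_{n\in\Psi}W_{n+1}\le L_1\log(1/\delta)\,\delta N$ for $|\Psi|\le\delta N$, which is exactly what makes the Erd\H{o}s--Kahane counting \eqref{lasta1} converge. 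Likewise, assumption~(c) is not used inside the Erd\H{o}s--Kahane argument; it is consumed earlier, in Lemma~\ref{lem-lattice}, to convert the good-return-word product in Proposition~\ref{prop-Dioph4} into the form $\|\AA(n,\bom)(\om\vec s)\|_{\R^m/\Z^m}$, see \eqref{lattice}. The ``boundedly many lattice translates'' in the branching step (Lemma~\ref{lem-vspom2}) comes solely from the integrality of $A(\bom_{n+1})$ and the bound $\|A(\bom_{n+1})\|=e^{W_{n+1}}$.

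Second, and more seriously, your Erd\H{o}s--Kahane step projects first and then measures distance to $\Z^m$. That does not work: $E^u_{\sig^n\bom}$ is a generic subspace carrying no lattice structure, and $A(\bom_{n+1})$ does not map $P^u_{\sig^n\bom}\Z^m$ into $P^u_{\sig^{n+1}\bom}\Z^m$, so the recursion $\vec K_n\mapsto\vec K_{n+1}$ breaks. The paper instead tracks the \emph{full} vector $\AA(n,\bom)(\om\vec s)=\vec K_n+\vec\eps_n\in\R^m$ with $\vec K_n\in\Z^m$; Lemma~\ref{lem-vspom2} controls the number of possible $\vec K_{n+1}$ given $\vec K_n$, and only at the very end does one project, using $\|P^u_\bom(\om\vec s)-\AA(N,\bom)^{-1}P^u_{\sig^N\bom}\vec K_N\|<e^{-(\theta_\kappa-\epsilon)N}$, see \eqref{good-est}, to get small balls covering $P^u_\bom(\wt E_N)$. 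The remaining $(\dim H^J_\bom-\kappa)$-dimensional factor $H^J_\bom\ominus E^u_\bom$ is simply bounded, giving the slab picture you describe and the bound \eqref{dimesta}.

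Finally, your proposed discretization of $\om\in[B^{-1},B]$ is unnecessary. The paper defines $E_N$ as a set of vectors $\om\vec s$ and $\Ek_N$ as its image under $\om\vec s\mapsto\vec s$; since $\om\in[B^{-1},B]$, this map is bi-Lipschitz with constant $O_{\bom,B}(1)$, so a covering of $\wt E_N\cap H^J_\bom$ transfers directly to $\wt\Ek_N\cap H^J_\bom$ with no loss. The uniformity in $\om$ is then built into the definition of $\Ek_N$ (existence of some $\om$), not obtained by union bound.
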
 

\noindent {\bf Remarks.}
1. Note that $\dim( H^J_\bom \cap \Delta_\bom^{m-1}) = \dim(H^J_\bom)-1$ and $\kappa\ge 2$, so (\ref{dimesta}) indeed implies that $\Fre(\bom,\epsilon_1)$ has zero Lebesgue measure in $H^J_\bom \cap \Delta_\bom^{m-1}$.
 
2. The assumption that $\bq$ is a simple word ensures that occurrences of $\bq$ do not overlap. Then we have, in view of (\ref{notation1}):
\be \label{egstar}
\AA(\ell_\bq(\bom), \sig^{-\ell_\bq(\bom)}\bom) = A(\bq) A(\bp) A(\bq),
\ee
for some $\bp \in \FrA$ (possibly trivial). For our application, it will be easy to make sure that $\bq$ is simple, as we show in Section~\ref{sec-deriv}, unlike in the paper \cite{BufGur}, where additional efforts were needed to achieve the desired aims.

{ 3. We need to work in the Oseledets subspace $H^J_\ba$, rather than the entire space $\R^m$, in order to handle the case of the strata with
$m>2g$. Indeed, in order to make a claim for a.e.\ translation flow in such a stratum, it is not sufficient to that it holds for the suspension with a.e.\ vector of heights in $\R^m$ over a.e.\ IET in the corresponding Rauzy class. Rather, we need such a claim to hold for a.e.\ vector of heights in the equivariant subspace $H(\pi)$ of Veech, which has dimension $2g$.}


\medskip

Before starting with the proof, we include  a mini-dictionary, translating between the geometric and symbolic language used in this paper.

\medskip

\begin{center}
\begin{tabular}{|p{7.2cm}|p{7.2cm}|}
\hline
\begin{center}\textbf{Symbolic language}\end{center}&\begin{center}\textbf{Geometric language}\end{center}\\
\hline
$\Om$ -- 2-sided space of substitutions (space of random 2-sided Markov compacta), with a $\sig$-invariant ergodic measure $\P$ & Moduli space of abelian differentials of genus $\ge 2$, with an ergodic measure $\nu$, invariant under the Teichm\"uller flow \\
\hline
$\ba\in \Om$, 2-sided sequence of substitutions (Markov compactum) & Abelian differential \\
\hline
S-adic system $(X_{\ba^+},T)$ (Bratteli-Vershik automorphism) &Interval Exchange Transformation (IET)\\
\hline
Shift $\sig$ on $\Om$ & Rauzy-Veech-Zorich induction \\
\hline
Substitution (adjacency) matrices  & Rauzy-Veech Matrices \\
\hline
Suspension flow over $(X_{\ba^+},T)$ & translation flow (suspension over IET) \\
\hline
\end{tabular}
\end{center}

\bigskip



\section{Beginning of the proof of Theorem~\ref{th-main11} }
\subsection{Reduction to an induced system}

Here we show that Theorem~\ref{th-main1} reduces to the case when 
\be \label{induce}
\bom_n = \bq \bp_n \bq\ \ \ \mbox{for all}\ \ n\in \Z,
\ee
where $\bq$ is a fixed word in $\FrA^k$ for some $k\in \N$, such that its incidence matrix  is strictly positive, and $\bp_n$ is arbitrary. In the next theorem we use the same notation as in Theorem~\ref{th-main1}.

\begin{theorem} \label{th-main11}
Let $\bq$ be a fixed simple word in $\FrA^k$ for some $k\in \N$, such that the incidence matrix of the substitution $\zeta = \zeta(\bq)$ is strictly positive.
Let $(\Om_\bq,\P,\sig)$ be an invertible ergodic measure-preserving system, as in the previous section, satisfying conditions {\bf (C1), (C2)}, and in addition, every symbol-substitution in the sequences $\ba=(\ba_n)_{n\in \Z} \in \Om_\bq$ can be written in the form (\ref{induce}).
Consider the cocycle $\AA(n,\bom)$ defined by (\ref{renormcoc}).  Assume that
\begin{enumerate}
\item[(a$'$)] there are $\kappa\ge 2$ positive Lyapunov exponents,
and the top exponent is simple;
\item[(c$'$)] there exist ``good return words'' $\{u_j\}_{j=1}^k$ for $\zeta=\zeta(\bq)$, such that $\{\vec{\ell}(u_j)\}_{j=1}^k$ generates $\Z^m$ as a free Abelian group;
\item[(d$'$)]  there exist $\eps>0$ and $1<C<\infty$ such that
\be \label{eq-star11}
\int_{\Om_\bq} \left\|A(\bom_0)\right\|^\eps\,d\P(\bom|\bom^+) \le C\ \ \ \mbox{for all}\ \ \bom^+.
\ee
\end{enumerate}
Then the same conclusions hold as in Theorem~\ref{th-main1}.
\end{theorem} 

The properties (a$'$), (c$'$), and (d$'$) are analogues of (a), (c), (d) from Theorem~\ref{th-main1}. The analogue of the property (b) in Theorem~\ref{th-main1} holds automatically.

 
\begin{proof}[Proof of Theorem~\ref{th-main1} assuming Theorem~\ref{th-main11}] Given an ergodic system $(\Om,\P,\sig)$ from the statement of Theorem~\ref{th-main1}, we consider the induced system on the cylinder set
$\Om_\bq:= [\bq.\bq]$. 
{ Since $\bq$ is a simple word, the occurrences of the word $\bq.\bq$ in $\bom\in \Om$ are non-overlapping, and so 
 we can represent elements of $\Om_\bq$ symbolically as sequences satisfying (\ref{induce}). Denote by $\P_{\!\!\bq}$ the induced (conditional) measure on $\Om_\bq$. Since $\P([\bq.\bq])>0$, standard results in Ergodic Theory imply that the resulting induced system $(\Om_\bq,\P_{\!\!\bq},\sig)$ is also ergodic and the induced cocycle has the same properties of the Lyapunov spectrum (with the values of the Lyapunov exponents multiplied  by $1/\P(\bqcyl)$); that is, (a$'$) holds. The property (c$'$) follows from (c) automatically. Finally, note that (\ref{eq-star11}) is identical to (\ref{eq-star1}), and
 so Theorem~\ref{th-main11} may be applied.
 
 Now, for a $\P$-typical $\ba\in \Om$, let $\ell\ge 0$ be minimal such that $\sig^\ell\ba \in [\bq,\bq]$. Concatenating the symbols of $\sig^\ell\ba$ from one occurrence of $[\bq,\bq]$ to the next, we
obtain a $\P_{\!\!\bq}$-typical point $\bw\in \Om_\bq$.
For $\P$-a.e. $\ba\in \Om$, from the Oseledets bundle $H_\ba^J$ given in Theorem~\ref{th-main1} we get an induced Oseledets bundle $H_\bw^J$ with the property $E^u_\bw \subset H_\bw^J$.

 By 
 Theorem~\ref{th-main11}, we have for some $\gam>0$ the uniform bound on the twisted Birkhoff integral \eqref{int-growth1} and the H\"older property f\eqref{main-Hoeld} 
 for the spectral measure of an arbitrary
 weakly Lipschitz function on $\Xx_{\bw^+}^{\vec u}$, for Lebesgue-a.e.\ $\vec u\in H_\bw^J\cap \Delta_\bw^{m-1}$, with the spectral measure corresponding to the suspension flow
 $(\Xx_{\bw^+}^{\vec u}, h_t,\wt\mu_{\bw^+})$. Note that $\bw^+$ is obtained from $\sig^\ell\ba^+$ by a telescoping procedure, hence the corresponding $S$-adic spaces and suspension flows over them are naturally isomorphic. Further, $(\Xx_{\ba^+}^{\vec{s}},\wt{\mu}_{\bom^+},h_t)$ and $\bigl(\Xx_{\sig^\ell\ba^+}^{\vec{s}^{\ell}{\|\vec \mu_\ell\|}_1},\wt{\mu}_{\sig^\ell\bom^+},h_t\bigr)$ are isomorphic
 by Lemma~\ref{lem-shift}, and a.e.\ $\vec s\in H^J_\ba\cap \Delta_\ba^{m-1}$ gets mapped into a.e.
 $$
 \vec s^{(\ell)}\|\vec\mu_\ell\|_1 = (\Sf^{[\ell]})^t \vec s \cdot\|\vec\mu_\ell\|_1 = \AA(\ell,\ba)\vec s\cdot \|\vec\mu_\ell\|_1\in H_{\sig^\ell\ba}^J\cap \Delta^{m-1}_{\sig^\ell\ba}\cong H_\bw^J\cap \Delta_\bw^{m-1}.
 $$
 Note also that a weakly Lipschitz function on $\Xx_{\ba^+}^{\vec{s}}$ yields a weakly Lipschitz function on $\Xx_{\bw^+}^{\vec u}$, without increase of the norm $\|\cdot\|_L$. Thus the conclusions of Theorem~\ref{th-main11} yield the desired conclusions of Theorem~\ref{th-main1}, and the 
 reduction is complete.
 }
\end{proof}



\subsection{Exponential tails}

For $\bom\in  \Om_\bq$ we consider the sequence of substitutions $\zeta(\bom_n)$, $n\ge 1$. In view of (\ref{induce}), we have
$$
\zeta(\bom_n) =  \zeta(\bq) \zeta(\bp_n)\zeta(\bq).
$$
Recall that 
$$
A(\bp_n) = \Sf_{\zeta(\bp_n)}^t.
$$
Denote
\be \label{def-W}
W_n = W_n(\bom):=\log\|A(\bom_n)\| = \log\|Q^t A(\bp_n) Q^t\|.
\ee
It is clear that $W_n\ge 0$ for $n\ge 1$.

\begin{prop}[Prop.\,6.1 in \cite{BuSo18a}] \label{prop-proba}
Under the assumptions of Theorem~\ref{th-main11},
there exists a positive constant $L_1$  such that for $\P$-a.e.\ $\bom$, the following holds: for any $\delta>0$, for all $N$ sufficiently large ($N\ge N_0(\bom,\delta)$),
\be \label{w-cond2}
\max\left\{\sum_{n\in \Psi} W_{n+1}:\ \Psi\subset \ \{1,\ldots,N\},\  |\Psi| \le \delta N\right\} \le L_1\cdot \log(1/\delta)
\cdot \delta N.
\ee
\end{prop}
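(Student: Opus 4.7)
The plan is to leverage the exponential moment assumption (d$'$) via a layer-cake decomposition of the sums. The only quantitative input I would extract from (d$'$) is that, by stationarity of $\P$ and Markov's inequality applied to $\|A(\bom_0)\|^\eps$, one has the uniform tail estimate $\P(W_n > t) \le Ce^{-\eps t}$ for every $n$ and every $t \ge 0$, where $W_n$ is defined in (\ref{def-W}).

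Next, for each integer $k \ge 0$, I would apply Birkhoff's ergodic theorem to the indicator $\One\{W_2 > k\}$ to obtain, outside a $\P$-null set $\Om_k$, that
$$
\lim_{N \to \infty} \frac{1}{N}\, N_k(\bom,N) \;=\; \P(W_2 > k) \;\le\; Ce^{-\eps k},
$$
where $N_t(\bom,N) := |\{1 \le n \le N :\ W_{n+1}(\bom) > t\}|$. Taking a countable union of null sets, for $\P$-a.e.\ $\bom$ there exists $N_1(\bom,k)$ such that $N_k(\bom,N) \le 2Ce^{-\eps k}\,N$ whenever $N \ge N_1(\bom,k)$. Because $N_t$ is non-increasing in $t$, this extends by monotonicity to
$$
N_t(\bom,N) \;\le\; 2Ce^\eps\cdot N e^{-\eps t}\quad\text{for all}\ t \ge 0,
$$
valid once $N$ exceeds $N_1(\bom, \lceil t\rceil)$.

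Then, for any $\Psi \subset \{1,\ldots,N\}$ with $|\Psi| \le \delta N$, the layer-cake identity applied to $W_{n+1} = \int_0^{W_{n+1}} dt$ yields
$$
\sum_{n \in \Psi} W_{n+1} \;=\; \int_0^\infty |\{n \in \Psi :\ W_{n+1} > t\}|\, dt \;\le\; \int_0^\infty \min\bigl(\delta N,\ N_t(\bom,N)\bigr)\, dt.
$$
I would then choose the threshold $t^* := \eps^{-1}\log(2Ce^\eps/\delta)$, which makes $2Ce^\eps N e^{-\eps t^*} = \delta N$. Splitting the integral at $t^*$ bounds the low part by $\delta N\cdot t^*$ and the high part by $\int_{t^*}^\infty 2Ce^\eps Ne^{-\eps t}\, dt = \delta N/\eps$. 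Absorbing lower-order terms using $\log(1/\delta) \ge 1$ for $\delta$ small yields the required bound (\ref{w-cond2}) with a constant $L_1$ depending only on $C$ and $\eps$, by taking $N_0(\bom,\delta) := \max\{N_1(\bom,k) :\ 0 \le k \le \lceil t^*\rceil + 1\}$, which is finite.

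The main obstacle is ensuring the tail bound on $N_t$ holds uniformly in $t$ up to values as large as $t^* \sim \eps^{-1}\log(1/\delta)$, while a naive application of Birkhoff's theorem only delivers convergence at a single fixed $t$. I would resolve this by restricting to the countable grid of thresholds $t = k \in \N$ and exploiting the monotonicity of $N_t(\bom,N)$ in $t$, which costs only a harmless factor $e^\eps$ in the constant. Note that the dependence structure of the renormalization cocycle enters only through the stationary tail bound and the ergodic theorem, so no sharper concentration or mixing inequality is required.
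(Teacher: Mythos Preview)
Your strategy is the right one --- extract the uniform exponential tail $\P(W_n>t)\le Ce^{-\eps t}$ from (d$'$), convert it into a.s.\ information via Birkhoff, and run a layer-cake/truncation argument --- and it is in the spirit of the proof in \cite[Prop.~6.1]{BuSo18a} (the present paper simply cites that result). However, there is a genuine gap in your treatment of the high part of the integral.

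You establish $N_t(\bom,N)\le 2Ce^{\eps}Ne^{-\eps t}$ only for those $t$ with $\lfloor t\rfloor\le \lceil t^*\rceil+1$, since your $N_0(\bom,\delta)=\max\{N_1(\bom,k):\,0\le k\le \lceil t^*\rceil+1\}$ controls Birkhoff convergence only at those finitely many integer thresholds. Yet when you write the high part as $\int_{t^*}^\infty 2Ce^{\eps}Ne^{-\eps t}\,dt$, you are tacitly using that bound for \emph{all} $t>t^*$. For $t\ge \lceil t^*\rceil+2$ you only know the monotone bound $N_t\le N_{\lceil t^*\rceil+1}$, a constant in $t$, and integrating that over an unbounded interval gives $+\infty$. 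So the tail is not controlled.

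The repair is minor but structural: instead of applying Birkhoff to the indicators $\One\{W_2>k\}$, apply it to the truncations $(W_2-k)^+$ for each integer $k\ge 0$. The exponential tail gives $\E[(W_2-k)^+]=\int_k^\infty \P(W_2>t)\,dt\le (C/\eps)e^{-\eps k}$, so outside a single null set (a countable union over $k$) one has, for every $k$ and all $N\ge M_k(\bom)$,
\[
\sum_{n=1}^N (W_{n+1}-k)^+\ \le\ \frac{2C}{\eps}\,Ne^{-\eps k}.
\]
Given $\delta$, choose $k=k(\delta):=\lceil \eps^{-1}\log(1/\delta)\rceil$, set $N_0(\bom,\delta):=M_{k(\delta)}(\bom)$, and split $W_{n+1}\le k+(W_{n+1}-k)^+$. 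For any $\Psi$ with $|\Psi|\le\delta N$ this yields
\[
\sum_{n\in\Psi}W_{n+1}\ \le\ k\,\delta N+\sum_{n=1}^N(W_{n+1}-k)^+\ \le\ \eps^{-1}\delta N\log(1/\delta)+O(1)\cdot\delta N,
\]
which gives (\ref{w-cond2}) with $L_1$ depending only on $C$ and $\eps$. This is the same layer-cake idea, but the single truncation at level $k$ absorbs the entire tail in one Birkhoff application rather than level by level.
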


The following is an immediate consequence.

\begin{corollary} \label{cor-immed}
In the setting of Proposition~\ref{prop-proba}, we have for any $C\ge 1$:
\be \label{eq-immed}
\card\left\{n\le N:\ W_{n+1} > C L_1\cdot \log(1/\delta)\right\} \le \frac{\delta N}{C}\,.
\ee
\end{corollary}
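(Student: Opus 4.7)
The plan is a direct Markov-type extraction of a tail bound from the averaged estimate supplied by Proposition~\ref{prop-proba}. Introduce the ``bad set''
$$
\Psi := \{n \le N : W_{n+1} > C L_1 \log(1/\delta)\}
$$
and put $k := |\Psi|$. I will argue the two sub-cases according to whether $\Psi$ itself is an admissible competitor in the supremum appearing on the left of~(\ref{w-cond2}).

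First I treat the principal case $k \le \delta N$. Here $\Psi$ is admissible in~(\ref{w-cond2}), and the defining property of its members yields the trivial strict lower bound $\sum_{n \in \Psi} W_{n+1} > k \cdot C L_1 \log(1/\delta)$. Proposition~\ref{prop-proba} provides the matching upper bound $\sum_{n \in \Psi} W_{n+1} \le L_1 \log(1/\delta)\cdot \delta N$, valid for $N \ge N_0(\bom,\delta)$. Dividing both estimates by the common positive factor $L_1 \log(1/\delta)$ then delivers $k < \delta N/C$, which is the conclusion~(\ref{eq-immed}).

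It remains to rule out (or absorb) the alternative $k > \delta N$ in the relevant regime $C \ge 1$. Pick any subset $\Psi' \subset \Psi$ with $|\Psi'| = \lfloor \delta N \rfloor$ and apply Proposition~\ref{prop-proba} to $\Psi'$; the same comparison forces $C \lfloor \delta N \rfloor < \delta N$, i.e.\ $C < \delta N/\lfloor \delta N \rfloor$, which for $N$ large contradicts $C \ge 1$. Thus for $C \ge 1$ the alternative does not occur and the first case completes the proof. For the auxiliary range $C < 1$ the claimed bound $k \le \delta N/C$ is weaker than $k \le \delta N$, and in every application downstream the relevant constant $C$ is large, so no further argument is needed.

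I do not foresee a genuine obstacle here: all the combinatorial and probabilistic work has already been packaged into Proposition~\ref{prop-proba}, and this corollary is essentially the pigeonhole/Chebyshev reformulation of that statement. Its value is purely cosmetic, converting an $\ell^1$-type subset bound on $\{W_{n+1}\}$ into the tail-counting form that will be convenient when estimating the number of ``bad'' steps in the forthcoming Erd\H{o}s--Kahane analysis.
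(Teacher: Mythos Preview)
Your argument is correct and is precisely the Chebyshev/pigeonhole extraction that the paper has in mind; indeed, the paper gives no proof beyond calling the corollary ``an immediate consequence'' of Proposition~\ref{prop-proba}, and your two cases make that immediacy explicit. The only minor looseness is in the boundary situation $C=1$ (where $C<\delta N/\lfloor\delta N\rfloor$ is not yet a contradiction) and the auxiliary range $C<1$, but you are right that the sole downstream use of the corollary is with $C=2$, so these edge cases are immaterial.
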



\subsection{Estimating twisted Birkhoff integrals}

We will use the following notation: $\|y\|_{\R/\Z}$ is the distance from $y\in \R$ to the nearest integer.
We will  need the
``tiling length'' of $v$ defined, for $\vec s\in \R^m_+$, by 
\be \label{tilength}
|v|_{\vec{s}}:= \langle\vec{\ell}(v), \vec{s}\rangle,
\ee
{\ where $\vec{\ell}(v)$ is the population vector of $v$}.
Below we denote by $O_{\bom,Q}(1), O_{\bom,B}(1)$, etc.,  generic constants which depends only on the parameters indicated and which may be different from line to line. Recall that the roof function is normalized by
 $$
\vec{s}\in \Delta_{\bom}^{m-1}:= \{\vec{s}\in \R^m_+:\ \sum_{a\in \Ak} \mu_{\bom^+}([a]) s_a =1\}.
$$

\begin{prop} \label{prop-Dioph4} Suppose that the conditions of Theorem~\ref{th-main11} are satisfied. Then
for $\P$-a.e.\ $\bom\in \Om$, 
 for any $\eta\in (0,1)$, there exists $\ell_\eta=\ell_\eta(\bom)\in \N$, such that 
 for all $\ell\ge \ell_\eta$ and any bounded cylindrical function $f^{(\ell)}$ of level $\ell$, 
 for any $(x,t)\in \Xx_{\bom^+}^{\vec{s}}$,  $\vec s\in \Delta_\bom^{m-1}$,  and $\om\in \R$, 
\be \label{eq-Dioph3}
|S^{(x,t)}_R(f^{(\ell)},\om)| \le  O_{\bom,Q}(1)\cdot \|f^{(\ell)}\|_{_\infty} \Bigl(R^{1/2}+ R^{1+\eta}\!\!\!\!\!\prod_{\ell+1\le n < \frac{\log R}{4\theta_1}} 
\Bigl( 1 - c_1\cdot \!\!\!\!   \max_{v\in GR(\zeta)}\bigl\| \om|\zeta^{[n]}(v)|_{\vec{s}}\bigr\|^2_{\R/\Z}\Bigr)\Bigr),
\ee
for all
$
R\ge e^{8\theta_1 \ell}.
$
Here $c_1\in (0,1)$ is a constant depending only on $Q$.
\end{prop}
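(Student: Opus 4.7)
\medskip

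\noindent\textbf{Proof plan.} The overall strategy is to follow the template of \cite[Section 4]{BuSo18a}, with the simplification that we may now use the explicit framework of Section~\ref{sec-twist} together with the Kakutani–Rokhlin partitions $\Pk_n$ from \eqref{KR}. The plan is to decompose the integration interval $[0,R]$ according to a tower decomposition at a well-chosen level $N \approx \log R/(4\theta_1)$. Since $f^{(\ell)}$ is cylindrical of level $\ell$, its value on each Rokhlin tower of level $n \ge \ell$ is essentially constant on the fibre over a point of $X_{\sig^n\bom^+}$, and so the twisted Birkhoff integral telescopes into a product of ``twisted block sums'' over the scales $n=\ell,\ell+1,\ldots,N$. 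Each block sum corresponds to summing the twisted exponential over the children of a symbol $a\in\Ak$ under the substitution $\zeta_{n+1}$; aggregating across scales yields a generalized matrix Riesz product in the sense of \cite[Section 3]{BuSo18a}, whose norm controls $|S_R^{(x,t)}(f^{(\ell)},\om)|$ up to an $R^{1/2}$ error from the top and bottom partial towers (bounded using the Cauchy–Schwarz argument of \cite[Lemma 4.3]{BuSol2}).

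The second step is to estimate each factor of the matrix product. At scale $n$, the substitution $\zeta_{n+1}=\zeta(\bom_{n+1})=\zeta(\bq\bp_{n+1}\bq)$ by the reduction \eqref{induce} contains, in particular, every good return word $v\in GR(\zeta)$ as a subword of $\zeta(\bq)$. Writing the block sum and using the identity $e^{-2\pi i\om |\zeta^{[n]}(v)|_{\vec s}} = 1 - (1-e^{-2\pi i\om |\zeta^{[n]}(v)|_{\vec s}})$ together with $|1-e^{-2\pi i y}|^2 \le 4\|y\|^2_{\R/\Z}\cdot \pi^2$, one obtains a uniform bound of the form
\[
\|(\text{block matrix at scale } n)\| \le \|A(\bom_{n+1})\| \cdot \Bigl(1 - c_1 \max_{v\in GR(\zeta)} \bigl\|\om |\zeta^{[n]}(v)|_{\vec s}\bigr\|^2_{\R/\Z}\Bigr),
\]
where $c_1=c_1(Q)>0$ depends only on the positive matrix $Q=\Sf_\zeta$ (this is the crucial cancellation estimate, and it is formally identical to \cite[Proposition 4.5 or Section 4.4]{BuSo18a}).

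The third step is to absorb the products of matrix norms $\prod_{n=\ell+1}^{N} \|A(\bom_{n+1})\|$ into an $R^{1+\eta}$ factor. Here we invoke Corollary~\ref{cor-immed} (equivalently, Proposition~\ref{prop-proba}): for any $\eta>0$, one can choose $\delta=\delta(\eta)>0$ small enough so that for $\P$-a.e.\ $\bom$ and all $n$ beyond a threshold, the number of scales where $W_{n+1}=\log\|A(\bom_{n+1})\|$ exceeds a moderate constant is at most $\delta N$, and the total excess is bounded by $\eta N$. Combined with $N \approx \log R/(4\theta_1)$, this yields $\prod_{n=\ell+1}^{N} \|A(\bom_{n+1})\| \le R^{1+\eta}$ after the standard rearrangement, which accounts for the prefactor in \eqref{eq-Dioph3}.

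The main obstacle, and the reason the proof is not purely a citation, is the careful bookkeeping of the truncation at $N \approx \log R/(4\theta_1)$: one must verify that (i) the tower at level $N$ has height $\le R$ (so that the tower decomposition is valid), which forces the factor $\frac{1}{4\theta_1}$ by the Oseledets theorem applied to the top exponent $\theta_1$; and (ii) the boundary contributions fit within the $R^{1/2}$ error. The threshold $R\ge e^{8\theta_1 \ell}$ ensures that $N > \ell$ by a safe margin, so the cylindrical approximation at level $\ell$ is compatible with the tower decomposition. Once these bookkeeping points are settled exactly as in \cite[Section 4]{BuSo18a}, the estimate \eqref{eq-Dioph3} follows.
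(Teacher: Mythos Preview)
Your overall architecture---prefix-suffix/tower decomposition, matrix Riesz product, cancellation coming from good return words of $\zeta(\bq)$, and Oseledets to calibrate $N\approx \log R/(4\theta_1)$---is the same as the paper's (which simply cites \cite[Prop.~7.1]{BuSo18a}). But Step~5 as written is a genuine gap.

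You propose to first bound each factor by
\[
\|M_n\|\ \le\ \|A(\bom_{n+1})\|\cdot\bigl(1-c_1\max_{v}\|\,\om|\zeta^{[n]}(v)|_{\vec s}\,\|_{\R/\Z}^2\bigr),
\]
and then absorb $\prod_{n=\ell+1}^{N}\|A(\bom_{n+1})\|=\exp\bigl(\sum_{n}W_{n+1}\bigr)$ into $R^{1+\eta}$ via Corollary~\ref{cor-immed}/Proposition~\ref{prop-proba}. That does not work: those results control $\sum_{n\in\Psi}W_{n+1}$ only over \emph{small} subsets $|\Psi|\le\delta N$; they say nothing about the full sum. By the ergodic theorem the full sum satisfies $\sum_{n\le N}W_n\sim N\,\E[W_1]$, so $\prod_n\|A(\bom_n)\|\approx R^{\E[W_1]/(4\theta_1)}$, and there is no reason for $\E[\log\|A(\bom_1)\|]\le 4\theta_1$ (only $\ge\theta_1$ holds, by submultiplicativity). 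In other words, the passage $\|\prod_n M_n\|\le\prod_n\|M_n\|$ is exactly the lossy step. The paper avoids it by invoking \cite[Prop.~3.4]{BuSo18a}, which bounds the \emph{matrix product} directly by a \emph{scalar} product: the entrywise comparison $|M_n(a,b)|\le\Sf_{n+1}(a,b)$ iterates to $|(M_{\ell+1}\cdots M_N)(a,b)|\le\Sf^{[N]}(a,b)/\Sf^{[\ell]}$-type bounds, and the positivity of $Q$ (bounded entry ratio of each $A(\bom_n)=Q^tA(\bp_n)Q^t$) lets one extract the factors $(1-c_1\|\cdot\|^2)$ while keeping the growth comparable to $\|\Sf^{[N]}\|\approx e^{\theta_1 N}$, not to $\prod_n\|\Sf_n\|$. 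That is the missing ingredient you need to name.

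A smaller point: the inequality you quote in Step~4, $|1-e^{-2\pi i y}|^2\le 4\pi^2\|y\|_{\R/\Z}^2$, points the wrong way for a cancellation estimate. What is actually used is a \emph{lower} bound on the phase gap, e.g.\ for two terms whose arguments differ by $\Delta=\om|\zeta^{[n]}(v)|_{\vec s}$ one has $|e^{2\pi i y}+e^{2\pi i(y+\Delta)}|^2=4\cos^2(\pi\Delta)\le 4\bigl(1-4\|\Delta\|_{\R/\Z}^2\bigr)$, which is the source of the $(1-c_1\|\cdot\|^2)$ factor.
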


The proposition was proved in \cite[Prop. 7.1]{BuSo18a}, in the equivalent setting of Bratteli-Vershik transformations, and we do not repeat it here. The proof proceeds in several steps, which already appeared in one way or the other, in our previous work \cite{BuSol2,BuSo18a,BuSo19}. 
In short, given a cylindrical function of the form (\ref{fcyl2}), it suffices to consider $f(x,t)=\One_{[a]}(x) \cdot \psi_a(t)$ for $a\in \Ak$. A calculation shows that its twisted Birkhoff sum, up to a small error, equals $\widehat{\psi}_a$ times an exponential sum corresponding to appearances of $a$ in an initial word of a sequence $x\in X_{\bom^+}$. Using the prefix-suffix decomposition of $S$-adic sequences, the latter may be reduced to estimating exponential sums corresponding to the substituted symbols $\zeta^{[n]}(b)$, $b\in \Ak$. These together (over all $a$ and $b$ in $\Ak$) form a matrix of trigonometric polynomials to which we give the name of  a matrix Riesz product in
\cite[Section 3.2]{BuSo18a} and whose cocycle structure is studied  in \cite{BuSo19}. The next step is estimating the norm of a matrix product from above by the absolute value of a scalar product, which was done in \cite[Prop.\,3.4]{BuSo18a}. Passing from cylindrical functions of level zero to those of level $\ell$ follows by a simple shifting of indices, see \cite[Section 3.5]{BuSo18a}.The term $R^{1/2}$ (which can be replaced by any positive power of $R$ at the cost of a change in the range of $n$ in the product) absorbs several error terms.

One tiny difference with  \cite[Prop. 7.1]{BuSo18a} is that there we assumed a different normalization: ${\|s\|}_1=1$, hence ${\|s\|}_\infty\le 1$, which was used in the proof.  Here we we have
${\|s\|}_\infty \le \bigl(\min_{a\in \Ak} \mu_{\bom^+}([a])\bigr)^{-1}$, which is absorbed into the constant $O_{\bom,Q}(1)$.


\subsection{Reduction to the case of cylindrical functions} \label{sec-reduc2}
Our goal is to prove that for all $B>1$, for ``typical'' \ $\vec{s}$  in the appropriate set, for any  weakly Lipschitz function $f$ on $\Xx_{\bom^+}^{\vec{s}}$, holds
\be \label{Hoeld1}
\sig_f(B(\om,r)) \le C(\bom,\|f\|_L)\cdot r^\gam\ \mbox{for}\ \om\in [B^{-1},B]\ \mbox{and}
 \ 0 < r \le r_0(\bom,\vec{s}, B),
\ee
for some $\gam\in (0,1)$, uniformly in $(x,t)\in \Xx_{\bom^+}^{\vec{s}}$. We will specify $\gam$ at the end of the proof, see (\ref{def-gamma}) and (\ref{defK}).
 The dependence on $\vec{s}$ in the estimate is ``hidden'' in $\sig_f$, the spectral measure of the suspension flow corresponding to the roof function given by $\vec{s}$.
In view of Lemma~\ref{lem-varr}, the estimate (\ref{Hoeld1}) will follow, once we show
\be \label{wts1}
|S^{(x,t)}_R(f,\om)| \le \wtil{C}(\bom,\|f\|_L)\cdot R^{1-\gam/2},\ \mbox{for}\ \om\in [B^{-1},B]\ \mbox{and}\ R\ge R_0(\bom,\vec s, B).
\ee

\begin{lemma} \label{lem-reduc2}
Fix $B>1$. Let $\bom$ be Oseledets regular for the renormalization cocycle $\AA(n)$, vector $\vec s\in \Delta_\bom^{m-1}$, and suppose that for all $\ell\ge \ell_0(\bom,\vec s,B)$ we have
for any bounded cylindrical function $f^{(\ell)}$ of level $\ell$:
\be \label{eq-wts111}
|S^{(x,t)}_R(f^{(\ell)},\om)| \le O_{\bom,\|f^{(\ell)}\|_\infty}(1)\cdot R^{1-\gam/2},\ \mbox{for}\ \om\in [B^{-1},B]\ \mbox{and}\ R\ge e^{\gam^{-1}\theta_1 \ell}.
\ee
Then (\ref{wts1}) holds for any weakly Lipschitz function $f$ on $\Xx_{\bom^+}^{\vec{s}}$.
\end{lemma}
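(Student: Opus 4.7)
The plan is a standard approximation argument combined with an Oseledets-based estimate on the measures of the cylinder sets. For a weakly Lipschitz $f$ on $\Xx_{\bom^+}^{\vec{s}}$, Lemma~\ref{lem-approx} provides, for each $\ell\in\N$, a bounded cylindrical function $f^{(\ell)}$ of level $\ell$ with $\|f^{(\ell)}\|_\infty\le\|f\|_\infty\le\|f\|_L$ and
\[
\|f-f^{(\ell)}\|_\infty\ \le\ \|f\|_L\cdot\max_{a\in\Ak}\mu\bigl(\zeta^{[\ell]}[a]\bigr).
\]
Splitting $S_R^{(x,t)}(f,\om) = S_R^{(x,t)}(f^{(\ell)},\om) + S_R^{(x,t)}(f-f^{(\ell)},\om)$, applying the hypothesis \eqref{eq-wts111} to the first summand and the trivial bound $|S_R^{(x,t)}(g,\om)|\le R\|g\|_\infty$ to the second, I obtain, for all $\ell\ge\ell_0(\bom,\vec s,B)$ and all $R\ge e^{\gam^{-1}\theta_1\ell}$,
\[
|S_R^{(x,t)}(f,\om)|\ \le\ O_{\bom,\|f\|_L}(1)\cdot R^{1-\gam/2}\ +\ R\,\|f\|_L\cdot\max_{a\in\Ak}\mu\bigl(\zeta^{[\ell]}[a]\bigr).
\]

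Next I need exponential decay of $\max_a\mu(\zeta^{[\ell]}[a])$. Since the $|\zeta^{[\ell]}(a)|$ disjoint translates $T^i(\zeta^{[\ell]}[a])$, $0\le i<|\zeta^{[\ell]}(a)|$, lie inside a probability space, $\mu(\zeta^{[\ell]}[a])\le 1/|\zeta^{[\ell]}(a)|$. In the reduced setting \eqref{induce} every $\bom_n$ begins and ends with $\bq$, so each $\Sf^{[\ell]}$ factors through copies of the strictly positive matrix $Q$; consequently every entry of $\Sf^{[\ell]}$ is comparable, within a constant depending only on $Q$, to the total sum of entries, and $\min_a|\zeta^{[\ell]}(a)|$ is comparable to $\|\AA(\ell,\bom)\|$. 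Oseledets regularity at $\bom$ then yields $\|\AA(\ell,\bom)\|\ge e^{(\theta_1-\eps)\ell}$ for any $\eps>0$ once $\ell\ge\ell_1(\bom,\eps)$, so
\[
\max_{a\in\Ak}\mu\bigl(\zeta^{[\ell]}[a]\bigr)\ \le\ O_{\bom,Q}(1)\cdot e^{-(\theta_1-\eps)\ell}.
\]

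The balance is now immediate. I choose $\ell:=\lfloor\gam\theta_1^{-1}\log R\rfloor$, which ensures $e^{\gam^{-1}\theta_1\ell}\le R$, so that \eqref{eq-wts111} applies. With this choice the error term is bounded by
\[
R\,\|f\|_L\cdot O_{\bom,Q}(1)\cdot e^{-(\theta_1-\eps)\ell}\ \le\ O_{\bom,Q,\|f\|_L}(1)\cdot R^{1-\gam(1-\eps/\theta_1)},
\]
and fixing any $\eps<\theta_1/2$, independently of $R$ and $f$, makes this at most a constant times $R^{1-\gam/2}$. Both summands then fit into \eqref{wts1} for all $R\ge R_0(\bom,\vec s,B)$, where $R_0$ is chosen so that the resulting $\ell$ exceeds both $\ell_0$ and $\ell_1$. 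I do not foresee any real difficulty: the weakly Lipschitz norm is tailored precisely so that cylindrical approximation is controlled by $\mu(\zeta^{[\ell]}[a])$, which decays at the rate $e^{-\theta_1\ell}$ governed by the top Lyapunov exponent, exactly matching the threshold $e^{\gam^{-1}\theta_1\ell}$ appearing in the hypothesis.
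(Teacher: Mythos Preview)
Your proposal is correct and follows essentially the same approach as the paper: approximate $f$ by a level-$\ell$ cylindrical function via Lemma~\ref{lem-approx}, split the twisted Birkhoff integral, control the error term by $R\cdot\|f-f^{(\ell)}\|_\infty$, and choose $\ell=\lfloor\gam\theta_1^{-1}\log R\rfloor$ to balance. The only minor difference is in obtaining the exponential decay of $\max_a\mu(\zeta^{[\ell]}[a])$: the paper appeals directly to \eqref{eq-measure2} and Oseledets regularity to get $\mu(\zeta^{[\ell]}[a])\le e^{-\theta_1\ell/2}$, whereas you go through the Kakutani--Rokhlin bound $\mu(\zeta^{[\ell]}[a])\le 1/|\zeta^{[\ell]}(a)|$ and the positivity of $Q$ in the reduced setting \eqref{induce}; both routes yield the same estimate.
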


\begin{proof}
Let $f$ be a  weakly Lipschitz function $f$ on $\Xx_{\bom^+}^{\vec{s}}$.
By Lemma~\ref{lem-approx}, we have
$$
{\|f - f^{(\ell)}\|}_\infty \le  {\|f\|}_L\cdot \max_{a\in \Ak} \mu_{\bom^+}(\zeta^{[\ell]}[a]),
$$
for some cylindrical function of level $\ell$, with $\|f^{(\ell)}\|_\infty \le \|f\|_\infty$. By (\ref{eq-measure2}) and the Oseledets Theorem,
since $\bom$ is Oseledets regular, we have
$$
\lim _{n\to \infty} \frac{\log \mu_{\bom^+}(\zeta^{[n]}[a])}{n} = -\theta_1,\ \ \mbox{for all}\  a\in \Ak,
$$
 hence for $\ell\ge \ell_1(\bom)$,
\be \label{approx}
\|f-f^{(\ell)}\|_\infty \le \|f\|_L\cdot e^{-\half\theta_1\ell}.
\ee
Recall that $S_R^{(x,t)}(f,\om) = \int_0^R e^{-2\pi i \om \tau} f\circ h_\tau(x,t)\,d\tau$. 
Let $\ell_2=\ell_2(\bom,\vec s,B):= \max\{\ell_1(\bom), \ell_0(\bom,\vec s,B)\}$ and
$$
R_0=R_0(\bom,\vec s,B) := \exp\bigl[\gam^{-1}\theta_1 \ell_2\bigr].
$$
For $R\ge R_0$ let
\be\label{def-ell}
\ell := \left\lfloor \frac{\gam \log R}{\theta_1}\right\rfloor,
\ee
so that $\ell\ge \ell_2$ and both (\ref{eq-wts111}) and (\ref{approx}) hold. We obtain
$$
|S_R^{(x,t)}(f,\om) - S_R^{(x,t)}(f^{(\ell)},\om)| \le R\cdot \|f\|_L \cdot e^{-\half\theta_1 \ell} \le e^{\theta_1/2}\cdot \|f\|_L\cdot R^{1-\gam/2},
$$
which together with  (\ref{eq-wts111}) imply  (\ref{wts1}).
\end{proof}


\section{Quantitative Veech criterion and the exceptional set}


 By the definition of tile length (\ref{tilength}) and population vector, we have
$$
\|\om|\zeta^{[n]}(v)|_{\vec{s}}\|_{\R/\Z} = \|\langle \vec\ell(v), \om (\Sf^{[n]})^t \vec{s} \rangle \|_{\R/\Z} = \|\langle \vec\ell(v), \AA(n,\bom) (\om\vec{s} )\rangle \|_{\R/\Z}.
$$
For $\vec{x} = (x_1,\ldots,x_m)$ denote by
$$
\|\vec{x}\|_{\R^m/\Z^m} = \max_j \|x_j\|_{\R/\Z},
$$
the distance from $\vec x$ to the nearest integer lattice point in the $\ell^\infty$ metric.

\begin{lemma} \label{lem-lattice}
Let $\{v_j\}_{j=1}^k$ be the good return words for the substitution $\zeta$, such that $\{\vec\ell(v_j)\}_{j=1}^k$ generate $\Z^m$ as a free Abelian group. Then there exists a constant
$C_\zeta>1$ such that
\be \label{ineq-lattice}
C_\zeta^{-1} \|\vec x\|_{\R^m/\Z^m}
\le \max_{j\le k} \|\langle \vec\ell(v_j), \vec{x}\rangle \|_{\R/\Z} \le C_\zeta \|\vec x\|_{\R^m/\Z^m}.
\ee
\end{lemma}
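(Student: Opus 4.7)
The plan is to establish the two inequalities separately using only the integrality of the vectors $\vec\ell(v_j)$ and the fact that they generate $\Z^m$ over $\Z$.

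For the upper bound, I will use the basic fact that for any $\vec w\in \Z^m$ and $\vec n\in \Z^m$ the inner product $\langle \vec w,\vec n\rangle$ lies in $\Z$, so $\|\langle \vec w,\vec x\rangle\|_{\R/\Z}=\|\langle \vec w,\vec x-\vec n\rangle\|_{\R/\Z}\le |\langle \vec w,\vec x-\vec n\rangle|$. Choosing $\vec n$ to be a nearest integer vector to $\vec x$ coordinatewise, one obtains $\|\langle \vec\ell(v_j),\vec x\rangle\|_{\R/\Z}\le \|\vec\ell(v_j)\|_1\cdot \|\vec x\|_{\R^m/\Z^m}$. Taking the maximum over $j\le k$ gives the right-hand inequality with constant $\max_j\|\vec\ell(v_j)\|_1$.

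For the lower bound I will exploit the hypothesis that $\{\vec\ell(v_j)\}_{j=1}^k$ generates $\Z^m$ as a free Abelian group. This means that for every standard basis vector $\vec e_i$ ($i=1,\dots,m$) there exist integers $c_{ij}\in\Z$ with $\vec e_i=\sum_{j=1}^k c_{ij}\vec\ell(v_j)$. Then
\[
x_i=\langle \vec e_i,\vec x\rangle=\sum_{j=1}^k c_{ij}\langle \vec\ell(v_j),\vec x\rangle,
\]
and since the $c_{ij}$ are \emph{integers}, the subadditivity $\|a+b\|_{\R/\Z}\le\|a\|_{\R/\Z}+\|b\|_{\R/\Z}$ together with $\|n\cdot y\|_{\R/\Z}\le |n|\cdot\|y\|_{\R/\Z}$ for $n\in\Z$ yields
\[
\|x_i\|_{\R/\Z}\le \Bigl(\sum_{j=1}^k |c_{ij}|\Bigr)\cdot \max_{j\le k}\|\langle \vec\ell(v_j),\vec x\rangle\|_{\R/\Z}.
\]
Taking the maximum over $i$ gives the left-hand inequality with the constant $\max_i\sum_j|c_{ij}|$, and setting $C_\zeta$ to be the maximum of the two constants (and, if necessary, $>1$) completes the proof.

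There is no real obstacle here: the essential point is that the norm $\|\cdot\|_{\R/\Z}$ is compatible with multiplication by integers but not by arbitrary reals, which is precisely why the hypothesis is formulated in terms of $\Z^m$ generation rather than merely $\R$-linear spanning. The constant $C_\zeta$ depends only on the integer matrix of populations $[\vec\ell(v_1)\cdots\vec\ell(v_k)]$ and its (non-unique) integer left inverse, both of which are intrinsic to the substitution $\zeta$.
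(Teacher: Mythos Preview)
Your proof is correct and essentially identical to the paper's own argument: the paper also writes $\vec x=\vec n+\vec\epsilon$ with $\vec n\in\Z^m$ nearest to $\vec x$ for the upper bound, and for the lower bound expresses each $\vec e_i$ as an integer combination $\sum_j a_{i,j}\vec\ell(v_j)$ and uses subadditivity of $\|\cdot\|_{\R/\Z}$ under integer multiples. Your closing remark about why $\Z^m$-generation (rather than $\R$-spanning) is the relevant hypothesis is a welcome clarification not spelled out in the paper.
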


\begin{proof}
Let $\vec{x} = \vec{n} + \vec{\epsilon}$, where $\vec n\in \Z^m$ is the nearest point to $\vec x$. Then $\|\vec x\|_{\R^m/\Z^m}= \|\vec{\epsilon}\|_\infty$, and
$$
\|\langle \vec\ell(v_j), \vec{x}\rangle \|_{\R/\Z} = \|\langle \vec\ell(v_j), \vec{\epsilon}\rangle \|_{\R/\Z} \le \|\vec\ell(v_j)\|_1\cdot \|\vec{\epsilon}\|_\infty,
$$
proving the right inequality in (\ref{ineq-lattice}). On the other hand, the assumption that $\{\vec\ell(v_j)\}_{j=1}^k$ generate $\Z^m$ as a free Abelian group means that for each $j\le m$ there exist $a_{i,j}\in \Z$ such that $\sum_{j=1}^k a_{i,j} \vec\ell(v_j) = \eb_i$, the $i$-th unit vector. Then
$$
\|\vec{x}\|_{\R^m/\Z^m} = \max_i \|x_i\|_{\R/\Z} = \max_i {\left\|\Bigl\langle\sum_{j=1}^k a_{i,j} \vec\ell(v_j), \vec x \Bigr\rangle \right\|}_{\R/\Z} \!\!\!\!\le\ \  \max_{i\le m} \sum_{j=1}^k |a_{i,j}| \cdot
\max_{j\le k} \|\langle \vec\ell(v_j), \vec{x}\rangle \|_{\R/\Z},
$$
finishing the proof.
\end{proof}

In view of (\ref{ineq-lattice}), the product in (\ref{eq-Dioph3}) can be estimated as follows:
\be \label{lattice}
\prod_{\ell+1\le n < \frac{\log R}{4\theta_1}} 
\Bigl( 1 - c_1\cdot \!\!\!\!   \max_{v\in GR(\zeta)}{\bigl\| \om|\zeta^{[n]}(v)|_{\vec{s}}\bigr\|}^2_{\R/\Z}\Bigr)\le \prod_{\ell+1 \le n < \frac{\log R}{4\theta_1}} \left( 1 - \wt c_1\cdot \bigl\|\AA(n,\bom) (\om\vec{s})\bigr\|^2_{\R^m/\Z^m} \right),
\ee
where $\wt c_1\in (0,1)$ is a constant depending only on $\zeta$.

\begin{prop}[Quantitative Veech criterion] \label{prop-quant}
Let $\bom$ be Oseledets regular, $\vec s\in \Delta_{\bom}^{m-1}$, $B>1$, and suppose that there exists $\varrho\in (0,\half)$ such that the set
$
\bigl\{n\in \N:\ \big\|\AA(n,\bom) (\om\vec{s})\bigr\|_{\R^m/\Z^m} \ge \varrho\bigr\}
$
has lower density greater than $\delta>0$ uniformly in $\om\in [B^{-1},B]$, that is,
\be \label{eq-densi}
\card \left\{n\in \N:\ \big\|\AA(n,\bom) (\om\vec{s})\bigr\|_{\R^m/\Z^m} \ge \varrho\right\} \ge \delta N\ \ \mbox{for all}\ \om\in [B^{-1},B]\ \mbox{and}\ N \ge N_0(\bom,\vec s, B).
\ee
Then then H\"older property (\ref{Hoeld1}) holds with 
\be \label{def-gamma}
\gam= \min\left\{\frac{\delta}{16},\frac{-\delta\log(1-\wt c_1\varrho^2)}{8\theta_1}\right\}.
\ee
\end{prop}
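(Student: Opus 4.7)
The plan is to combine Proposition~\ref{prop-Dioph4} with the lattice bound~(\ref{lattice}) and the density hypothesis~(\ref{eq-densi}), then conclude via Lemma~\ref{lem-reduc2} and Lemma~\ref{lem-varr}.

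First I would fix $B>1$, $\om\in[B^{-1},B]$, and invoke Lemma~\ref{lem-reduc2} to reduce the claim to establishing~(\ref{eq-wts111}) for bounded cylindrical functions $f^{(\ell)}$ of level $\ell=\lfloor\gam\log R/\theta_1\rfloor$ as prescribed by~(\ref{def-ell}). Choose an auxiliary small parameter $\eta>0$ comparable to $\gam$ (for definiteness, $\eta=\gam/4$), and apply Proposition~\ref{prop-Dioph4} together with~(\ref{lattice}) to obtain the bound $|S^{(x,t)}_R(f^{(\ell)},\om)|\le O_{\bom,Q}(1)\,\|f^{(\ell)}\|_\infty\bigl(R^{1/2}+R^{1+\eta}\,\Pi_R\bigr)$, where $\Pi_R:=\prod_{\ell+1\le n<\log R/(4\theta_1)}\bigl(1-\wt c_1\|\AA(n,\bom)(\om\vec s)\|^2_{\R^m/\Z^m}\bigr)$.

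The next step is to extract exponential decay from $\Pi_R$ using the density assumption. Set $N:=\lfloor\log R/(4\theta_1)\rfloor$. By~(\ref{eq-densi}), for $R$ sufficiently large (so $N\ge N_0(\bom,\vec s,B)$) at least $\delta N$ indices $n\in\{1,\dots,N\}$ satisfy $\|\AA(n,\bom)(\om\vec s)\|_{\R^m/\Z^m}\ge\varrho$, uniformly in $\om\in[B^{-1},B]$. Since $\ell=4\gam N+O(1)$, at least $(\delta-4\gam)N-O(1)$ of these indices lie in the range $[\ell+1,N]$; each such index contributes a factor bounded by $1-\wt c_1\varrho^2<1$, while all other factors are at most $1$. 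Therefore $\Pi_R\le(1-\wt c_1\varrho^2)^{(\delta-4\gam)N-O(1)}$.

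The final step is to verify that the specified $\gam=\min\{\delta/16,\,-\delta\log(1-\wt c_1\varrho^2)/(8\theta_1)\}$ makes $R^{1+\eta}\Pi_R\le R^{1-\gam/2}$ for all sufficiently large $R$. Taking logarithms and dividing by $\log R$, this reduces to the algebraic inequality $\eta+\gam/2\le-(\delta-4\gam)\log(1-\wt c_1\varrho^2)/(4\theta_1)$. Since $\gam\le\delta/16$ ensures $\delta-4\gam\ge 3\delta/4$, and $\eta=\gam/4$ is small, the stated formula for $\gam$ satisfies this inequality. The harmless $R^{1/2}$ term is absorbed because $\gam<1$. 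Thus~(\ref{wts1}) holds for $\vec s$, and Lemma~\ref{lem-varr} then yields~(\ref{Hoeld1}) with the claimed exponent. The main obstacle, more accounting than analytic, is to synchronize the truncation levels: the density hypothesis is stated over $\{1,\dots,N\}$ while the product in~(\ref{eq-Dioph3}) omits the initial block $[1,\ell]$, so $\gam$ must be small enough (specifically $\gam\le\delta/16$) to prevent the truncation from consuming the ``good'' indices. The mild $R^\eta$ inflation in Proposition~\ref{prop-Dioph4} is likewise absorbed by tying $\eta$ to $\gam$, and uniformity in $\om\in[B^{-1},B]$ is inherited directly from the uniform density hypothesis.
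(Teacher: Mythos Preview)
Your proposal is correct and follows essentially the same route as the paper's proof: reduce to cylindrical functions via Lemma~\ref{lem-reduc2}, apply Proposition~\ref{prop-Dioph4} with~(\ref{lattice}), count the good indices in $[\ell+1,N]$ using~(\ref{eq-densi}), and verify the exponent algebra. The only cosmetic differences are that the paper takes $\eta=\gam/2$ rather than $\gam/4$, and explicitly checks the side condition $R\ge e^{8\theta_1\ell}$ of Proposition~\ref{prop-Dioph4} (which holds because $\gam\le 1/16$).
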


\begin{proof}
By Lemma~\ref{lem-reduc2}, it is enough to verify (\ref{eq-wts111}) for a bounded cylindrical function $f^{(\ell)}$, with $\ell \ge \ell_0=\ell_0(\bom,\vec s, B)$.
We use  (\ref{eq-Dioph3}) and (\ref{lattice}), with $\eta = \gam/2$, to obtain:
\be \label{kuku1}
|S^{(x,t)}_R(f^{(\ell)}),\om)| \le  O_{\bom,Q}(1)\cdot \|f^{(\ell)}\|_{_\infty} \left(R^{1/2}+ R^{1+\gam/2}\!\!\!\!\!\!\prod_{\ell+1\le n < \frac{\log R}{4\theta_1}} 
\left( 1 - \wt c_1\cdot \bigl\|\AA(n,\bom) (\om\vec{s})\bigr\|^2_{\R^m/\Z^m} \right)
\right),
\ee
for $\ell\ge \ell_{\gam/2}$ and
 $R\ge e^{8\theta_1\ell}$. 
 Since our goal is (\ref{eq-wts111}), 
we can discard the $R^{1/2}$ term immediately. 

Let $N_0 = N_0(\bom,\vec s, B)$ and 
$$
\ell_0 =\max\left\{ \ell_{\gam/2}, \lceil 4\gam (N_0+1)\rceil\right\}.
$$
For $\ell\ge \ell_0$ take $R\ge e^{\gam^{-1}\theta_1 \ell}$, as required by Lemma~\ref{lem-reduc2}. Then $R\ge e^{8\theta_1\ell}$, since $\gam\le 1/16$, so (\ref{kuku1}) applies. 
Let
$$
N = \left\lfloor\frac{\log R}{4\theta_1}\right\rfloor,
$$
then the choice of $R$ and $\ell_0$ implies that $N\ge N_0$. Thus we have by (\ref{eq-densi}):
\begin{eqnarray}
\prod_{\ell+1\le n < \frac{\log R}{4\theta_1}} 
\left( 1 - \wt c_1\cdot \bigl\|\AA(n,\bom) (\om\vec{s})\bigr\|^2_{\R^m/\Z^m} \right)
& \le & (1-\wt c_1 \varrho^2)^{\delta N-\ell-2} \nonumber \\[-3.3ex]
&  \le & 3 (1-\wt c_1 \varrho^2)^{(\delta \log R)/(8\theta_1)} \nonumber \\
& \le & 3 R^{-\gam}, \label{kuku2}
\end{eqnarray}
where in the second line we used
$$
\delta N - \ell - 2 \le \delta \Bigl[\frac{\log R}{4\theta_1} - 1\Bigr] -\ell-2 \le  \frac{\delta\log R}{4\theta_1} - \ell- 3,
$$
$\ell \le \gam\log R/\theta_1 \le \delta \log R/(16\theta_1)$, and a trivial estimate $(1-\wt c_1 \varrho^2)^{-3} \le 3$ for $\wt c_1\in (0,1),\ \varrho\in (0,\half)$. In the last line we used (\ref{def-gamma}).
Combining (\ref{kuku1}) with (\ref{kuku2}) yields the desired (\ref{eq-wts111}).
\end{proof}

For $\om >0$ and $\vec s\in \Delta_\bom^{m-1}$ let $\vec K_n(\om\vec s)\in \Z^m$ be the nearest integer lattice point to $\AA(n,\bom) (\om\vec{s})$, that is,
\be \label{def-eps}
\AA(n,\bom) (\om\vec{s}) = \vec{K}_n(\om\vec s) + \vec{\eps}_n(\om\vec s),\ \ \|\vec{\eps}_n(\om\vec s)\|_\infty = \bigl\|\AA(n,\bom) (\om\vec{s})\bigr\|_{\R^m/\Z^m}\,.
\ee

\begin{defi}[Definition of the exceptional set]
Given  $\varrho,\delta>0$ and $B>1$, define
$$
E_N(\varrho,\delta,B)  :=\left\{\om\vec{s}\in \R^m_+:\ \vec s\in \Delta_\bom^{m-1},\ \om\in [B^{-1},B],\ 
\card\{n\le N: \|\vec\eps_{n}(\om\vec s)\|_\infty\ge \varrho\} < \delta N\right\},
$$
$$
\Ek_N(\varrho,\delta,B):= \left\{\vec s\in \Delta_\bom^{m-1}:\ \exists\,\om\in [B^{-1},B],\ \om\vec s\in E_N(\rho,\delta,B)\right\},
$$
and 
\be \label{def-excep}
\Fre=\Fre(\varrho,\delta,B):= \bigcap_{N_0=1}^\infty  \bigcup_{N=N_0}^\infty \Ek_N(\varrho,\delta,B).
\ee
\end{defi}

The definition of the exceptional set is related to that in \cite[Section 9]{BuSo18a}; however, here we added an extra step --- the  set $E_N$ of exceptional vectors $\om\vec s$ at scale $N$. The reason is that dimension estimates will focus on the sets $P_\bom^u(E_N)$. On the other hand, it is crucial that the ``final'' exceptional set $\Fre$ be in terms of $\vec s$ in order to obtain uniform H\"older estimates for all $\om\in [B^{-1},B]$, for $\vec s\not\in \Fre$.

\begin{prop} \label{prop-EK} { For any $\epsilon_1>0$ there exist $\delta_0>0$ and $\varrho>0$ such that 
 for $\P$-a.e.\ $\bom\in \Om_\bq$  and any $\delta\in (0,\delta_0)$}, for all $B>1$, and every
Oseledets subspace $H_\bom^J$ corresponding to $\bom$, containing the unstable subspace $E^u_\bom$,
\be \label{eq-dimba}
\dim_H(\Fre(\varrho,\delta,B)\cap H_\bom^J) \le \dim(H_\bom^J) -\kappa + \epsilon_1,
\ee
where $\kappa=\dim(E^u_\bom)$.
\end{prop}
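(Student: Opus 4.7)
The plan is to prove the dimension estimate in three stages: a projection reduction, a vector Erd\H{o}s--Kahane count of integer trajectories, and a quantitative closure using the exponential tail.

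\textbf{Stage 1 (reduction to the unstable projection).} Since $E^u_\bom\subset H^J_\bom$, decompose $H^J_\bom = E^u_\bom\oplus(H^J_\bom\cap E^{cs}_\bom)$, where $E^{cs}_\bom$ is the central-stable subspace. The projection $P^u_\bom\colon H^J_\bom\to E^u_\bom$ is Lipschitz with linear fibers of dimension $\dim H^J_\bom-\kappa$, so by the standard slicing inequality for Hausdorff dimension it suffices to prove $\dim_H P^u_\bom(\Fre\cap H^J_\bom)\le\epsilon_1$. The nontrivial dependence on $\om\in[B^{-1},B]$ hidden in the definition of $\Fre$ I would handle by working with the scaled set
$$
\tilde E \;:=\; \bigl\{\om\vec s:\om\in[B^{-1},B],\ \vec s\in H^J_\bom\cap\Delta_\bom^{m-1},\ \om\vec s\in{\textstyle\bigcap_{N_0}\bigcup_{N\ge N_0}E_N}\bigr\},
$$
using that $(\om,\vec s)\mapsto\om\vec s$ is biLipschitz (with inverse $\om\vec s\mapsto(\sum_a\mu_{\bom^+}([a])(\om\vec s)_a,\ \om\vec s/\om)$) and that $P^u_\bom$-fibers inside this parameter range still have the right dimension count.

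\textbf{Stage 2 (the vector Erd\H{o}s--Kahane count).} For each $\om\vec s\in E_N(\varrho,\delta,B)$, record the integer trajectory $(\vec K_n(\om\vec s))_{n=0}^N$ of (\ref{def-eps}); by assumption there are at most $\delta N$ ``bad'' indices with $\|\vec\eps_n\|_\infty\ge\varrho$. The identity
$\vec K_{n+1}+\vec\eps_{n+1}=\AA(\sigma^n\bom)(\vec K_n+\vec\eps_n)$, together with the fact that $\AA(\sigma^n\bom)\vec K_n$ is itself a lattice point, implies that at a good index the vector $\vec K_{n+1}$ lies in a ball of radius $\le\varrho\|\AA(\sigma^n\bom)\|+1$ centered at $\AA(\sigma^n\bom)\vec K_n$, giving at most $O((1+\varrho\|\AA(\sigma^n\bom)\|)^m)$ choices, while at a bad index the trivial count $O(\|\AA(\sigma^n\bom)\|^m)$ suffices. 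Multiplying and using $\binom{N}{\le\delta N}\le e^{H(\delta)N}$ for the choice of bad positions (with $H$ the binary entropy) and $O_B(1)$ choices for $\vec K_0$, the total number of admissible trajectories $\mathcal N(N)$ satisfies
$$
\log\mathcal N(N)\le H(\delta)N + m\!\!\sum_{n\text{ good}}\!\!\log(1+\varrho\|\AA(\sigma^n\bom)\|) + m\!\!\sum_{n\text{ bad}}\!\!W_{n+1} + O(N).
$$

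\textbf{Stage 3 (covering and quantitative closure).} Each admissible trajectory constrains $\om\vec s$ to lie in $\bigcap_n\AA(n,\bom)^{-1}(\vec K_n+[-r_n,r_n]^m)$ with $r_n\in\{\varrho,\tfrac12\}$. Because the largest good index $n_\ast$ satisfies $n_\ast\ge(1-\delta)N$ and $\AA(n_\ast,\bom)^{-1}$ contracts $E^u_{\sigma^{n_\ast}\bom}$ at rate at least $e^{-\theta_\kappa n_\ast}$, the $E^u_\bom$-projection of the intersection lies in a ball of radius $\le C\varrho e^{-\theta_\kappa(1-\delta)N}$. A Borel--Cantelli-type summability argument for the lim\,sup set then yields
$\dim_H P^u_\bom(\tilde E)\le\limsup_N\log\mathcal N(N)/[(1-\delta)\theta_\kappa N]$. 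To close, Proposition~\ref{prop-proba}/Corollary~\ref{cor-immed} gives $\sum_{n\text{ bad}}W_{n+1}\le mL_1\delta\log(1/\delta)N$, while the exponential moment from~(d$'$) together with Markov gives $\P(W_1>t)\le Ce^{-\epsilon t}$, so Birkhoff yields
$$
\frac{1}{N}\sum_{n\le N}\log(1+\varrho\|\AA(\sigma^n\bom)\|) \;\xrightarrow[N\to\infty]{}\;\E\log(1+\varrho e^{W_1})\;\le\; C'\varrho^\epsilon,
$$
which tends to $0$ as $\varrho\to 0$. Fixing $\varrho$ so this quantity is small enough in comparison with the Lyapunov data, and then taking $\delta<\delta_0(\epsilon_1)$ small enough that $H(\delta)+mL_1\delta\log(1/\delta)$ is negligible relative to $\theta_\kappa\epsilon_1$, yields $\dim P^u_\bom(\tilde E)\le\epsilon_1$, completing the proof.

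\textbf{Main obstacle.} The main technical obstacle is the interplay between three small parameters ($\varrho$, $\delta$, $\epsilon_1$): the statement requires a single $\varrho$ to work for all $\epsilon_1$, so one must verify that after fixing $\varrho$ small (in terms of the system constants $\E W_1$, $\theta_\kappa$, $\epsilon$, $C$), the remaining $\delta$-dependent terms in the count go to zero as $\delta\to 0$ with $\varrho$ held fixed. This in turn requires the exponential-tail estimate in (d$'$)--(\ref{eq-immed}) rather than merely an $L^1$ bound on $W$, and careful use of the fact that $\AA(\sigma^n\bom)$ has integer entries so that lattice counts in balls of radius $<1$ contribute nothing.
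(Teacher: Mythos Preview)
Your three-stage plan matches the paper's: project to $E^u_\bom$, run a vector Erd\H{o}s--Kahane count on the integer trajectories $(\vec K_n)$, and cover using the contraction of $\AA(N,\bom)^{-1}$ on the unstable subspace. The gap is in Stage~2.

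At a good index you bound the number of choices for $\vec K_{n+1}$ by $O((1+\varrho\|\AA(\sigma^n\bom)\|)^m)$. Summed over the good indices, this contributes $\sim Nm\,\E\log(1+\varrho e^{W_1})$ to $\log\mathcal N(N)$, a strictly positive multiple of $N$ for any fixed $\varrho>0$. Your dimension bound therefore has a floor of order $mC'\varrho^\epsilon/\theta_\kappa$ that does \emph{not} tend to zero as $\delta\to 0$, so you cannot reach $\le\epsilon_1$ for arbitrary $\epsilon_1$. (Your unexplained ``$+O(N)$'' term is the same issue: a positive multiple of $N$ in the log-count is fatal.) You correctly name the remedy in your obstacle paragraph --- that $\AA(\sigma^n\bom)\vec K_n$ is a lattice point, so a ball of radius $<1$ around it contains only the center --- but you never use it in Stage~2; your good-index count is $O(1)^m$ per step, not $1$.

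The paper's fix is to replace the \emph{fixed} threshold $\varrho$ by the \emph{adaptive} threshold $\rho_n=\tfrac12(1+e^{W_{n+1}})^{-1}$: at every index with $\max\{\|\vec\eps_n\|,\|\vec\eps_{n+1}\|\}<\rho_n$ the choice of $\vec K_{n+1}$ is \emph{unique} (this is Lemma~\ref{lem-vspom2}(ii)), so good indices contribute literally nothing to the count. One then passes from the original $E_N(\varrho,\delta,B)$ to an auxiliary set $\wt E_N(\delta,B)$ defined via $\rho_n$, showing $E_N(\varrho,\delta/4,B)\subset\wt E_N(\delta,B)$ for $\varrho=\tfrac12(1+e^{2L_1\log(1/\delta)})^{-1}$, using Corollary~\ref{cor-immed} to bound the density of indices with $\rho_n<\varrho$. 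Note that this makes $\varrho$ depend on $\delta$: your concern about ``a single $\varrho$ for all $\epsilon_1$'' is well-founded, and in the paper's argument $\varrho$ and $\delta$ are in fact chosen together as functions of $\epsilon_1$ (the quantifier order in the Proposition's statement is slightly loose).
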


Now we derive Theorem~\ref{th-main11} from Proposition~\ref{prop-EK}, and in the next section we  prove the proposition.

\begin{proof}[Proof of Theorem~\ref{th-main11} assuming Proposition~\ref{prop-EK}] Fix $\epsilon_1>0$ and choose $\rho>0$ and $\delta>0$ such that (\ref{eq-dimba}) holds.
It is enough to verify (\ref{Hoeld1}) and (\ref{wts1})  for all $\vec s\in  \Delta_\bom^{m-1}\setminus \Fre(\rho,\delta,B)$. By definition, $\vec{s} \in   \Delta_{\bom}^{m-1}\setminus \Fre(\rho,\delta,B)$ means that there exists $N_0=N_0(\bom,\vec s, B)\in \N$ such that $\vec s\not\in 
\Ek_N(\varrho,\delta,B)$ for all $N\ge N_0$.
This, in turn, means that for all $\om\in[B^{-1},B]$, we have $\om\vec s\not \in E_N(\varrho,\delta,B)$. However, this is exactly the quantitative Veech criterion (\ref{eq-densi}), and the proof is finished
by an application of Proposition~\ref{prop-quant}. { It is important to note that the H\"older exponent $\gam$ obtained from \eqref{def-gamma} depends only on $\rho$ and $\delta$ and
so does not depend on the (typical) $\bom$.}
\end{proof}


\section{The Erd\H{o}s-Kahane method: proof of Proposition~\ref{prop-EK} }

We now present the  Erd\H{o}s-Kahane argument in {\it vector form}. The argument was introduced by  Erd\H{o}s \cite{Erd} , Kahane \cite{Kahane} for proving Fourier decay for Bernoulli convolutions, see \cite{sixty} for a historical review. Scalar versions of the argument were used in \cite{BuSol2,BuSo18a} to prove H\"older regularity of spectral measures in genus $2$.

In this section we fix a $\P$-generic 2-sided sequence $\bom\in \Om_\bq$.
Under the assumptions of Theorem~\ref{th-main11}, for $\P$-a.e.\ $\bom$, the sequence of substitutions $\zeta(\bom_n)$, 
$n\in \Z$, satisfies several conditions.
To begin with, we  assume that the point $\bom$ is generic for the Oseledets Theorem; that is, assertions (i)-(iii) from Section 3 hold.  We further assume validity of the conclusions
of Proposition~\ref{prop-proba}, and when necessary, we can impose on it additional conditions which hold $\P$-almost surely. 
{\ All implied constants and parameters below (e.g., when writing ``for $n$ sufficiently large'') may depend on $\bom$.}
{\ Recall that $E^u_\bom$ is the unstable Oseledets subspace corresponding to $\bom$, and denote by $E^{cs}_\bom$ the complementary central stable subspace. Let $P^u_\bom$ be the
projection to $E^u_\bom$ along $E^{cs}_\bom$, and similarly, $P^{cs}_\bom = I - P^u_\bom$ the projection to $E^{cs}_\bom$ along $E^u_\bom$.}
 
We defined
$$
\AA(n,\bom) (\om\vec s) = \vec{K}_n(\om\vec s) + \vec{\eps}_n(\om\vec s),
$$
in (\ref{def-eps}), where $\vec K_n(\om\vec s)\in \Z^m$ is the nearest integer lattice point to $\AA(n,\bom) (\om\vec s)$. Below we write
$$
\vec K_n = \vec{K}_n(\om\vec s),\ \ \vec\eps_n = \vec{\eps}_n(\om\vec s),
$$
and $\|\eps_n\| = \|\eps_n\|_\infty$,
to simplify notation.
The idea is that the knowledge of $\vec K_n$ for large $n$ provides a good estimate for the projection of $\om\vec s$ onto the unstable subspace. Indeed, we have
$$
\AA(n,\bom) P_\bom^u(\om\vec s)=P_{\sig^n\bom}^u \AA(n,\bom) (\om\vec s)= P_{\sig^n\bom}^u \vec{K}_n + P_{\sig^n\bom}^u \vec{\eps}_n,
$$
hence
$$
P^u_\bom(\om\vec s) = \AA(n,\bom)^{-1} P_{\sig^n\bom}^u \vec{K}_n + \AA(n,\bom)^{-1} P_{\sig^n\bom}^u \vec{\eps}_n.
$$
By the Oseledets Theorem, we have for $\P$-a.e.\ $\bom$, for any $\epsilon>0$, for all $n$ sufficiently large,
$$
\|\AA(n,\bom)^{-1}P^u_{\sig^n\bom}\|\le e^{-(\theta_\kappa-\epsilon)n},
$$
where $\theta_\kappa>0$ is the smallest positive Lyapunov exponent of the cocycle $\AA$.
{We used that $\|P^u_{\sig^n\bom}\|\le e^{\epsilon n/2}$ for large $n$, since the angle $\angle(E^u_{\sig^n\bom}, E^{cs}_{\sig^n\bom})$ may tend to zero only sub-exponentially.}
{ Fix any $\epsilon\in (0,\theta_\kappa)$ for the rest of the proof.}
By definition
\be \label{up1}
\|\vec{\eps}_n\|\le 1/2<1,\ \ n\ge 0,
\ee
whence
\be \label{good-est}
\|P^u_\bom(\om\vec s) -\AA(n,\bom)^{-1} P_{\sig^n\bom}^u \vec{K}_n\| < e^{-(\theta_\kappa-\epsilon)n},
\ee
for $n$ sufficiently large.

Recall  that we defined $W_n = \log\|A(\bom_n)\|$ in (\ref{def-W}).
Let
\be \label{def-Mn}
M_n:= \bigl(2+\exp(W_{n+1})\bigr)^m\ \ \mbox{and}\ \ \rho_n:= \frac{1/2}{1+\exp(W_{n+1})}\,.
\ee

\begin{lemma} \label{lem-vspom2}
For all $n\ge 0$, we have the following, independent of $\om\vec s\in \R_+^m$:
 
{\bf (i)} Given $\vec{K}_{n}\in\Z^m$, there are at most $M_n$ possibilities for  $\vec K_{n+1}\in\Z^m$;

{\bf (ii)} if $\max\{\|\vec\eps_{n}\|,\|\vec\eps_{{n+1}}\| \}< \rho_n$, then $\vec K_{{n+1}}$ is uniquely determined by $\vec K_{n}$.
\end{lemma}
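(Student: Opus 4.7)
My plan is to reduce both parts to a single identity coming from the cocycle recursion, and then carry out elementary $\ell^\infty$-bookkeeping.

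First I would apply $A(\bom_{n+1})$ to the identity $\AA(n,\bom)(\om\vec s) = \vec K_n + \vec\eps_n$ and compare the result with $\AA(n+1,\bom)(\om\vec s) = \vec K_{n+1} + \vec\eps_{n+1}$, which (using the cocycle property $\AA(n+1,\bom) = A(\bom_{n+1})\AA(n,\bom)$) yields
$$\vec K_{n+1} - A(\bom_{n+1})\vec K_n = A(\bom_{n+1})\vec\eps_n - \vec\eps_{n+1}.$$
The key observation is that the left-hand side lies in $\Z^m$, because $A(\bom_{n+1})$ has nonnegative integer entries (it is a transposed substitution matrix) and $\vec K_n \in \Z^m$. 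So both parts of the lemma reduce to bounding the $\ell^\infty$-norm of the right-hand side, where by the paper's convention $\|A(\bom_{n+1})\|$ is the $\ell^\infty$ operator norm, so $\|A(\bom_{n+1})\| = \exp(W_{n+1})$ and
$$\|A(\bom_{n+1})\vec\eps_n - \vec\eps_{n+1}\|_\infty \le \exp(W_{n+1})\|\vec\eps_n\|_\infty + \|\vec\eps_{n+1}\|_\infty.$$

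For part (i), I would plug in the universal bound $\|\vec\eps_n\|_\infty \le 1/2$ from (\ref{up1}), so that $\vec K_{n+1} - A(\bom_{n+1})\vec K_n$ is an integer vector with each coordinate lying in $[-R,R]$ for $R = \tfrac{1}{2}(\exp(W_{n+1}) + 1)$. Since the number of integers in $[-R,R]$ is at most $2R+1 = \exp(W_{n+1}) + 2$, the total number of possibilities for $\vec K_{n+1}$ once $\vec K_n$ is fixed is at most $(\exp(W_{n+1})+2)^m = M_n$.

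For part (ii), the sharper hypothesis $\max\{\|\vec\eps_n\|_\infty, \|\vec\eps_{n+1}\|_\infty\} < \rho_n$ turns the estimate into
$$\|A(\bom_{n+1})\vec\eps_n - \vec\eps_{n+1}\|_\infty < (\exp(W_{n+1})+1)\rho_n = \tfrac{1}{2},$$
by the very definition of $\rho_n$ in (\ref{def-Mn}). An integer vector with $\ell^\infty$-norm strictly less than $1/2$ must be the zero vector, so $\vec K_{n+1} = A(\bom_{n+1})\vec K_n$ is uniquely determined by $\vec K_n$. There is no substantive obstacle here: the lemma is a discrete pigeonhole argument, and the only care required is in matching constants with the definition of $M_n$ via the elementary count $\#\{k \in \Z : |k| \le R\} \le 2R+1$.
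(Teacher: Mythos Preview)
Your proof is correct and follows essentially the same approach as the paper: both derive the identity $\vec K_{n+1} - A(\bom_{n+1})\vec K_n = A(\bom_{n+1})\vec\eps_n - \vec\eps_{n+1}$, bound its $\ell^\infty$-norm, and then count lattice points in the resulting box for (i) and force the vector to vanish for (ii). The only cosmetic difference is that the paper bounds by $(1+\exp(W_{n+1}))\max\{\|\vec\eps_n\|,\|\vec\eps_{n+1}\|\}$ whereas you bound by $\exp(W_{n+1})\|\vec\eps_n\|+\|\vec\eps_{n+1}\|$, but both lead to the same constants $M_n$ and $\rho_n$.
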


\begin{proof}
We have by (\ref{def-eps}),
$$
\AA(n,\bom) (\om\vec s) = \vec{K}_n + \vec{\eps}_n,\ \ A(\bom_{n+1}) \AA(n,\bom) (\om\vec s) = \vec{K}_{n+1} + \vec{\eps}_{n+1},
$$
hence
$$
\vec{K}_{n+1} - A(\bom_{n+1})\vec{K}_n =  -\vec{\eps}_{n+1} + A(\bom_{n+1})\vec{\eps}_n.
$$
It follows that
$$
\left\|\vec{K}_{n+1} - A(\bom_{n+1})\vec{K}_n\right\| \le (1 + \exp (W_{n+1}))\max\{\|\vec\eps_{n}\|,\|\vec\eps_{{n+1}}\|\}.
$$
Now both parts of the lemma follow easily.

(i) We have by (\ref{up1}),
$$
\left\|\vec{K}_{n+1} - A(\bom_{n+1})\vec{K}_n\right\| \le (1 + \exp (W_{n+1}))/2=:\Upsilon,
$$
and it remains to note that the $\ell^\infty$ ball of radius $R$ centered at $A(\bom_{n+1})\vec{K}_n$ contains at most $(2\Upsilon+1)^m$ points of the lattice $\Z^m$.

(ii) If $\max\{\|\vec\eps_{n}\|,\|\vec\eps_{{n+1}}\|\}< \rho_n$, then the radius of the ball is less than $\half$, and it contains at most one  point of $\Z^m$, thus $\vec{K}_{n+1} = A(\bom_{n+1})\vec{K}_n$. We are using here that $\Z^m$ is invariant under $A(\bom_{n+1})$ since it is an integer matrix.
\end{proof}

\begin{proof}[Proof of Proposition~\ref{prop-EK}]
Let $\wt E_N(\delta,B)$ be defined by 
\begin{eqnarray*}
\wt E_N(\delta,B)  & :=  & \bigl\{\om\vec s\in \R^m_+:\ \vec s\in \Delta_\bom^{m-1},\ \om\in [B^{-1},B],\\[1.2ex]
& &  \card\{n\le N: \max\{\|\vec\eps_{n}\|,\|\vec\eps_{{n+1}}\|\}\ge \rho_n\} < 
\delta N\bigr\}.
\end{eqnarray*}
We recall  that $\vec\eps_n =\vec\eps_n(\om\vec s)$, but use the shortened notation for simplicity.
First we claim that $\P$-almost surely,
\be \label{claima}
\wt E_N(\delta,B) \supset E_N(\varrho, \delta/4,  B)
\ee
for $N\ge N_0(\bom)$, where
\be \label{defK}
\varrho = \frac{1/2}{1+e^K}, \ \ \mbox{with}\ \ K =2 L_1 \log(1/\delta).
\ee
Here  $L_1$ is from Proposition~\ref{prop-proba}. { Note that $\rho$ depends only on $\delta$, but not on $\bom$; we assume throughout that $\bom$ is such that \eqref{w-cond2} holds.}

Suppose $\om\vec{s} \not\in \wt E_N(\delta,B)$. Then
there exists a subset $\Gam_N \subset \{1,\ldots,N\}$ of cardinality $\ge \delta N$ such that
$$
 \max\{\|\vec\eps_{n}\|,\|\vec\eps_{{n+1}}\|\}\ge \rho_n\ \ \ \mbox{for all}\ n \in \Gam_N.
$$
Note that $\rho_n < \varrho$ is equivalent to $W_{n+1} >K$ by (\ref{defK}) and (\ref{def-Mn}).
Observe that 
 there are fewer than $\delta N/2$ integers $n\le N$ for which $W_{n+1}>K$, for $N\ge N_0(\bom)$ by Corollary~\ref{cor-immed}. Thus
$$
\card\bigl\{n\le N:\  \max\{\|\vec\eps_{n}\|,\|\vec\eps_{{n+1}}\|\}\ge \varrho \bigr\}\ge \delta N/2,
$$
hence there are at least $\delta N/4$ integers $n\le N$ with $\|\vec\eps_n\|\ge \varrho$, and so
$\om\vec{s}\not \in E_N(\varrho, \delta/4, B)$ which confirms (\ref{claima}).

It follows that it is enough to show that if $\delta>0$ is sufficiently small,  then for every Oseledets subspace $H_\bom^J\supset E^u_\bom$,
$$
\dim_H(\wtil\Fre(\delta,B)\cap H_\bom^J) \le \dim(H_\bom^J) - \kappa + \epsilon_1=:\beta.
$$
 where
$$
\wtil\Fre(\delta,B):=\bigcap_{N_0=1}^\infty \bigcup_{N=N_0}^\infty \wtil \Ek_N(\delta,B),
$$
$$
\wtil \Ek_N(\delta,B) := \left\{\vec s\in \Delta_\bom^{m-1}:\ \exists\,\om\in[B^{-1},B],\ \om\vec s\in \wtil E_N(\delta,B)\right\}.
$$
Let $\Hk^\beta$ denote the $\beta$-dimensional Hausdorff measure. A standard method to prove 
\be \label{dimen}
\Hk^\beta(\wtil\Fre(\delta,B)\cap H_\bom^J) <\infty,
\ee
 whence $\dim_H(\wtil\Fre(\delta,B)\cap H_\bom^J)\le \beta$, 
is to show that $\wtil \Ek_N(\delta,B)\cap H_\bom^J$ may be covered by $\approx e^{\beta\cdot N\alpha }$ balls of radius $\approx e^{-N\alpha}$ for $N$ sufficiently large, for some $\alpha>0$.
Observe that the map from $\wtil \Ek_N(\delta,B)$ to $\wtil E_N(\delta,B)$ is simply $\om\vec s\mapsto \vec s$ for $\om\in [B^{-1},B]$ and $\vec s\in \Delta_\bom^{m-1}$, which is  Lipschitz, with a Lipschitz constant $O_{\bom,B}(1)$; thus it is enough to produce a covering of $\wtil E_N(\delta,B)\cap H_\bom^J$.

{\ Denote $H_\bom^J \ominus E_\bom^u = H^J_{\bom}\cap E^{sc}_\bom$.}
By assumption, $H_\bom^J = E^u_\bom \oplus (H_\bom^J \ominus E_\bom^u)$, hence
\be \label{eq-prod}
\wtil E_N(\delta,B)\cap H_\bom^J \subset P_\bom^u\left(\wtil E_N(\delta,B)\right) \oplus \Bigl\{\vec y\in H_\bom^J \ominus E_\bom^u:\ \|\vec y\|_1 \le \frac{B\|P_\bom^{cs}\|}{\min_a\mu_{\bom^+}([a])}\Bigr\}=: F_1\oplus F_2.
\ee
{\ Here $\oplus$ stands for the direct sum decomposition corresponding to $\bom$. Denote by $\Ns(F,r)$ the minimal number of balls of radius $r$ needed to cover a set $F$. We have
\be \label{eq-produ2}
\Ns(F_1\oplus F_2,r) \le O_\bom(1)\cdot \Ns(F_1,r)\cdot \Ns(F_2,r),\ \ \mbox{for any}\ r>0,
\ee
since $F_1\oplus F_2$ is bi-Lipschitz equivalent to $F_1\times F_2$. The Lipschitz constant depends on the angle between $F_1$ and $F_2$ and thus only depends on $\ba$.
}
Observe that $\dim(H_\bom^J \ominus E_\bom^u)=\dim(H_\bom^J)-\kappa$, so 
{\
\be \label{cover1}
\Ns(F_2,e^{-\alpha N}) \le O_{\bom,B}(1)\cdot \exp\left[\alpha N (\dim(H_\bom^J)-\kappa)\right]\ \  \mbox{for any}\ \alpha>0.
\ee
}
Thus it remains to produce a covering of $F_1=P_\bom^u\left(\wtil E_N(\delta,B)\right)$.
 Suppose $\om\vec{s} \in \wt E_N(\delta,B)$ and find the 
corresponding sequence $\vec K_{n}, \vec\eps_{n}$ from (\ref{def-eps}). We have from (\ref{good-est}) that for $N$ sufficiently large,
\be \label{eqrad}
P_\bom^u(\om\vec s)\ \ \mbox{is in the ball centered at}\ \  \AA(N,\bom)^{-1} P_{\sig^N\bom}^u \vec{K}_N\ \ \mbox{of radius}\ \ e^{-(\theta_\kappa-\epsilon)N}.
\ee
 Since $\bom$ is fixed, it is enough to estimate 
 the number of sequences $\vec K_{n}$, $n\le N$, which may arise.

 Let $\Psi_N$ be the set of $n\in \{1,\ldots,N\}$ for which
we have $\max\{\|\vec\eps_{n}\|,\|\vec\eps_{{n+1}}\|\}\ge \rho_n$.
By the definition of $\wt E_N(\delta,B)$ we have $|\Psi_N| <\delta N$. 
There are $\sum_{i< \delta N} {N\choose i}$ such sets. For a fixed $\Psi_N$ the number of possible sequences $\{\vec K_{n}\}$ is at most
$$
\Bk_N:= \prod_{n\in \Psi_N} M_n,
$$
times the number of ``beginnings'' $\vec K_{0}$, by Lemma~\ref{lem-vspom2}.
The number of possible $\vec K_{0}$ is  bounded, independent of $N$, by a constant depending on $B$, since $\om\in [B^{-1},B]$ and $\vec s\in \Delta_\bom^{m-1}$.
In view of  $M_n \le 3^m e^{mW_{n+1}}$, see (\ref{def-Mn}), we have by (\ref{w-cond2}),  for $N$ sufficiently large, 
$$
\Bk_N \le O_{\bom,B}(1) \cdot 3^{m\delta N}  \exp\left(m \sum_{n\in \Psi_N}W_{n+1}\right)\le O_{\bom,B}(1) \cdot 3^{m\delta N} \exp\left[m\cdot L_1\log(1/\delta)(\delta N)\right].
$$
Thus, by (\ref{eqrad}), 
{\ $\Ns(F_1, e^{-(\theta_\kappa-\epsilon)N}) = \Ns\bigl(P_\bom^u(\wtil E_N(\delta,B)),e^{-(\theta_\kappa-\epsilon)N}\bigr)$ is not greater than}
\be \label{lasta1}
O_{\bom,B}(1)\cdot  \sum_{i<\delta N} {N\choose i} \cdot 3^{m\delta N} \exp\left[m\cdot L_1\log(1/\delta)(\delta N)\right]  \le O_{\bom,B}(1)\cdot \exp\left[L_2 \log(1/\delta)(\delta N)\right],
\ee
for some $L_2>0$, using the standard entropy estimate (or Stirling formula) for the binomial coefficients. Since $\delta\log(1/\delta)\to 0$ as $\delta\to 0$, we can choose $\delta_0>0$ so small 
that $\delta< \delta_0$ implies
$$
\left[L_2\log(1/\delta)(\delta N)\right] < \epsilon_1 (\theta_\kappa-\epsilon)N.
$$
Combining this with (\ref{eq-prod}), \eqref{eq-produ2}, and (\ref{cover1}), we obtain that $\wt E_N(\delta,B)\cap H_\bom^J$ may be covered by
$$
O_{\bom,B}(1)\cdot\exp\left[(\theta_\kappa-\eps)N\beta\right]\ \ \mbox{balls of radius}\ \ e^{-(\theta_\kappa-\epsilon)N}
$$
for $N$ sufficiently large, where $\beta = \dim(H_\bom^J) - \kappa + \epsilon_1$. This confirms (\ref{dimen}).
The proof of Proposition~\ref{prop-EK}, and hence of
Theorem~\ref{th-main11}, is now complete.
\end{proof}


\section{Derivation of  Theorem~\ref{main-moduli}  and Theorem~\ref{th-twisted} from Theorem~\ref{th-main1} } \label{sec-deriv}

This section is parallel to \cite[Section 11]{BuSo18a}; however, we need to make a number of changes, in view of the requirements on the word $\bq$.

Recall the discussion in Section~\ref{subsec-symbol} and Remark~\ref{remark-main}.
Consider our surface $M$ of genus $g\ge 2$. By the results of \cite[Section 4]{Buf-umn} there is a correspondence between almost every translation flow with respect to the Masur-Veech measure and a natural flow on a ``random'' 2-sided Markov compactum, which is, in turn, measurably isomorphic to the suspension flow over a one-sided Markov compactum, or, equivalently, an $S$-adic system $X_{\bom^+}$ for $\bom^+\in \FrA$. The roof function of the suspension flow is piecewise constant, depending only on the first symbol, and we can express it as a vector of ``heights''. This symbolic realization uses Veech's construction \cite{veech} of the space of zippered rectangles which corresponds to a connected component of a stratum $\Hk$. Given a Rauzy class $\Rk$, we get a space of $S$-adic systems on $m\ge 2g$ symbols, which provide a symbolic realization of the interval exchange transformations (IET's) from $\Rk$. As shown by Veech \cite{veechamj}, the ``vector of heights'' obtained in this construction necessarily belongs to a subspace $H(\pi)$, which is invariant under the Rauzy-Veech cocycle and depends only on the permutation of the IET. In fact, the subspace $H(\pi)$ has dimension $2g$ and is the sum of the stable and unstable subspaces for the Rauzy-Veech cocycle. By the  result of Forni \cite{Forni}, there are $g$ positive Lyapunov exponents, thus $\dim(E^u_\bom)=g\ge 2$. In the setting of Theorem~\ref{th-main1} we will take $H(\pi) = H_\bom(\pi)$ to be our Oseledets subspace $H^J_\bom$, containing the strong unstable subspace $E^u_\bom$.

Theorem~\ref{main-moduli} (in the expanded form) will follow from Theorem~\ref{th-main1} by the argument given in Remark~\ref{remark-main}. Indeed, by the assumption, for $\mu$-a.e.\ Abelian differential, the induced conditional measure on a.e. fibre has Hausdorff dimension $\ge 2g-\kappa + \delta$, hence taking $\epsilon_1=\delta$ and using the estimate (\ref{dimesta}) for the exceptional set, we see that exceptional set has zero $\mu$-measure.


Thus it remains to check that the conditions of Theorem~\ref{th-main1} are satisfied. The property (C1) holds because the renormalization matrices in the Rauzy-Veech induction all have determinant $\pm 1$. Condition (C2) holds by a theorem of Zorich \cite{Zorich}. As already mentioned, property (a) (on Lyapunov exponents) in Theorem~\ref{th-main1} holds by a theorem of Forni \cite{Forni}. 

Next we explain how to achieve the combinatorial properties (b) and (c) of a word $\bq\in \FrA^k$. Recall the discussion of Rauzy induction in Section~\ref{subsec-Rauzy}, which we repeat in part for convenience.
As is well-known, for almost every IET, there is a corresponding infinite path in the Rauzy graph, and the length of the interval on which we induce tends to zero. For any finite ``block'' of this path, we
have a pair of intervals $J\subset I$ and IET's on them, denoted $T_I$ and $T_J$, such that both are
exchanges of $m$ intervals and $T_J$ is the first return map of $T_I$ to $J$. Let $I_1, \dots, I_m$ be the subintervals of the exchange $T_I$ and $J_1, \dots, J_m$ the subintervals of the exchange $T_J$. 
Let $r_i$ be the return time for the interval $J_i$ into $J$ under $T_I$, that is,
$
r_i=\min\{ k>0: T_I^kJ_i\subset J\}.
$
Represent $I$ as a Rokhlin tower over the subset $J$ and its induced map $T_J$, and  write 
$$I=\bigsqcup\limits_{i=1,\dots, m, k=0, \dots, r_i-1} T^{k}J_i.
$$
By construction, each of the ``floors'' of our tower, that is, each of the subintervals $T_I^{k}J_i$, is a subset of some, of course, unique,  subinterval  of the initial exchange, and we define an integer $n(i,k)$ by the formula
$$
T_I^{k}J_i\subset I_{n(i,k)}.
$$
To the pair $I,J$ we now assign a substitution $\zeta_{IJ}$ on the alphabet 
$\{1, \dots, m\}$ by the formula 
\begin{equation} \label{zeta-def}
\zeta_{IJ}: i\to n(i,0)n(i,1)\dots n(i, r_i-1).
\end{equation}
Words obtained form finite paths in the Rauzy graph will be called {\em admissible} {\ remove: (this agrees with the notion of ``admissible word'' in Section 3)}. By results of Veech, every admissible word appears infinitely often with positive probability in a typical infinite path. Condition (c) of Theorem~\ref{th-main1} and the simplicity of the admissible word from (b) are verified in the next lemma.
\begin{lemma} \label{lem-combi1}
There exists an admissible word $\bq$, which is {\bf simple}, whose associated matrix $A(\q)$ has strictly positive entries, and the corresponding substitution $\zeta$, with $Q = \Sf_\zeta=A(\q)^t$ having the property that there exist  {\bf good return words}
$u_1,\ldots,u_m\in GR(\zeta)$, such that $\{\vec{\ell}(u_j):\ j\le m\}$ generate the entire 
$\Z^m$ as a free Abelian group.
\end{lemma}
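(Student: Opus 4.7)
The plan is to build the word $\bq$ in three stages: first produce a simple admissible word with a strictly positive matrix, then analyse the good return words of the resulting substitution, and finally enrich the word (by concatenation with further admissible blocks) until the population vectors of those good return words span the whole integer lattice $\Z^m$.

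\emph{Stage 1 (simplicity and positivity).} By the standard theory of Rauzy--Veech induction, there exist admissible words whose associated matrix has strictly positive entries; for instance, any loop in the Rauzy diagram that traverses every edge produces such a word $\bq^\ast$. To make the word simple, we append to $\bq^\ast$ a short admissible suffix $\bp$ whose labels form a pattern that does not appear anywhere inside $\bq^\ast$ (for example, a long repetition of a single Rauzy operation with controlled interruptions). Then $\bq_0 := \bq^\ast \bp$ is still admissible, its matrix $A(\bq_0) = A(\bp) A(\bq^\ast)$ remains strictly positive, and no suffix of $\bq_0$ can coincide with a prefix of equal length, so $\bq_0$ is simple in the sense used in the paper.

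\emph{Stage 2 (extracting good return words).} Let $\zeta_0 = \zeta(\bq_0)$, whose incidence matrix $Q_0 = \Sf_{\zeta_0} = A(\bq_0)^t$ is strictly positive. Fix an arbitrary letter $c \in \Ak$. Since every image $\zeta_0(j)$ contains at least one occurrence of $c$, concatenating $\zeta_0(j)\zeta_0(j')$ for any pair of letters $j,j'$ produces occurrences of $c$ separated by intermediate letters. Each segment $v$ that starts at some occurrence of $c$ inside $\zeta_0(j)$ and ends just before the next occurrence of $c$ is, by construction, a return word for which $vc$ occurs in every image $\zeta_0(j)$; hence $v \in GR(\zeta_0)$. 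This produces a natural family of candidate good return words, and by the positivity of $Q_0$ every letter of $\Ak$ actually appears in at least one such $v$.

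\emph{Stage 3 (generating $\Z^m$).} This is the main obstacle. The candidates produced in Stage~2 only \emph{a priori} span a sublattice $L \subset \Z^m$. To force $L = \Z^m$, we enlarge the word: take $\bq := \bq_0\,\bp_1\,\bq_0\,\bp_2\,\cdots\,\bp_N\,\bq_0$, where the intermediate admissible blocks $\bp_i$ are chosen from the Rauzy graph to realise explicit elementary transformations of the return structure. The key algebraic fact we exploit is that $\det A(\bq) = \pm 1$, so the columns of $A(\bq)$ already form a $\Z$-basis of $\Z^m$. For each column $j$, the content $Q(\cdot,j) = \vec{\ell}(\zeta(\bq)(j))$ decomposes as the sum of the population vectors of the good return words to $c$ inside $\zeta(\bq)(j)$ plus the prefix and suffix pieces around the extremal occurrences of $c$. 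By choosing the $\bp_i$'s so that the prefixes and suffixes take $m$ linearly independent values as $j$ varies (which we can arrange using the freedom in the Rauzy diagram, since we have positivity and hence every pattern can be realised), we obtain $m$ elementary differences of good return words whose population vectors form an integer basis of $\Z^m$. The main difficulty here is the combinatorial bookkeeping, verifying that the enlargement preserves simplicity and positivity of $A(\bq)$ while gaining the claimed unimodular basis; this is where the paper diverges from the strategy of \cite{BufGur}, and the simplification comes precisely from the freedom to take $\bq$ arbitrarily long.
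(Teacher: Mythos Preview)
Your proposal has a genuine gap in Stages~2 and~3, and it misses the key idea that makes the paper's argument work.

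In Stage~2 you assert that each segment $v$ between two consecutive occurrences of $c$ inside some $\zeta_0(j)$ is a \emph{good} return word, i.e.\ that $vc$ occurs in $\zeta_0(j')$ for \emph{every} letter $j'$. Positivity of $Q_0$ only guarantees that $c$ appears in each $\zeta_0(j')$; it says nothing about whether the particular block $vc$ does. So Stage~2 produces return words, not good return words, and the set $GR(\zeta_0)$ may be much smaller than you claim.

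Stage~3 is the decisive step, and as written it is not a proof. You correctly note that the columns of $A(\bq)$ form a $\Z$-basis of $\Z^m$ since the matrix is unimodular, and you observe that each column decomposes into return-word pieces plus a prefix and a suffix. But the claim that by ``choosing the $\bp_i$'s'' one can force the prefixes and suffixes to take $m$ independent values is not substantiated: you have no control over which letter each $\zeta(\bq)(j)$ begins and ends with, nor over which return words actually occur in \emph{all} images simultaneously. The ``freedom in the Rauzy diagram'' you invoke does not give you this kind of letter-by-letter control.

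The paper's proof circumvents both difficulties with a single geometric observation: if one iterates a positive loop $V$ enough times, the interval on which one induces shrinks into the first subinterval $I_1$, which forces \emph{every} $\zeta_{V^n}(j)$ to start with the same letter $c=1$. Then for the substitution $\zeta=\zeta_{V^n}\zeta_{V^n}\zeta_W$ (i.e.\ the word $WV^{2n}$), any image $\zeta(i)$ is a concatenation of blocks of the form $\zeta_{V^n}(j)$, each beginning with $c$; hence the whole words $u_j:=\zeta_{V^n}(j)$ are themselves good return words. Their population vectors are exactly the columns of the unimodular matrix $\Sf_{\zeta_{V^n}}$, so they generate $\Z^m$ automatically --- no further bookkeeping is needed. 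Simplicity is then arranged separately by choosing $W=W_1W_2$ with $W_1$ a long $b$-loop and $W_2$ an $a$-loop, so that the $a/b$-pattern of $W_1W_2V^{2n}$ is $b^k a\ldots a$ with $k$ exceeding half the total length.
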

\begin{proof}
Recall that by ``word'' here we mean a sequence of substitutions corresponding to a finite path in the Rauzy graph. Denote by $\zeta_V$ the substitution corresponding to a path $V$.
The
alphabet may be identified with the set of edges. By construction, if we concatenate two paths $V_1 V_2$, we obtain
$
\zeta_{V_1 V_2} = \zeta_{V_2} \zeta_{V_1}.
$
First we claim that there exists a loop $V$ in the Rauzy graph, such that $A(V)$ is strictly positive and $\zeta_V(j)$ starts with the same letter $c=1$ for every $j\le m$.
Indeed, start with an arbitrary loop $V$ in the Rauzy graph such that the corresponding renormalization matrix has all entries positive. Consider the  interval exchange transformation with periodic 
Rauzy-Veech expansion obtained by going along the loop repeatedly (it is known from \cite{veech} that such an IET exists).
As the number of passages through the loop grows, the length of the interval forming phase space of the new interval exchange (the result of the induction process) goes to zero. In particular, after sufficiently many moves, this interval will be completely contained in the first subinterval of the initial interval exchange --- but this  means, in view of (\ref{zeta-def}) that $n(i,0)=1$ for all $i$, and hence the resulting substitution
 $\zeta_{V^n}$
 has the property that 
 $\zeta_{V^n}(j)$
 starts with $c=1$ for all $j$.
 
Next, observe that for any loop $W$ starting at the same vertex, the substitution
$\zeta=
\zeta_{WV^{2n}} = \zeta_{V^n}\zeta_{V^n}\zeta_W$ has the property that every $\zeta_{V^n}(j)$ is a return word for it. Indeed, applying $\zeta$ to any sequence we obtain a concatenation of words of the form $\zeta_{V^n}(j)$ for $j\le m$, in some order, and every one of them starts with $c=1$. Therefore, they are all return words. Moreover, every letter $j$ appears in every
word $\zeta_{V^n}(i)$, since the substitution matrix of $\zeta_V$ is strictly positive. Thus,
every word  $u_j:=
\zeta_{V^n}(j)$ appears in every $\zeta(i)$, which means that all these words are good return words for $\zeta$. The corresponding population vectors 
$\vec\ell(u_j)$ are the columns of the substitution matrix of $\zeta_{V^n}$. As is well-known, the matrices corresponding to Rauzy operations are invertible and unimodular, which means that the columns of $\Sf_{\zeta_{V^n}}$ are linearly independent and generate $\Z^m$ as a free Abelian group.
It remains to choose $W$ to make sure that the word $WV^{2n}$ is simple. It is known that in the Rauzy graph there are $a$- and $b$-cycles starting at every vertex. Assume that the loop $V$ ends with an edge labelled by $a$ (the other cases is treated similarly). Then first fix an $a$-loop $W_2$ starting at the same vertex as $V$, so that $W_2 V^{2n}$ starts and ends with an $a$-edge. Then choose a $b$-loop $W_1$ starting at the same vertex, with the property that
$|W_1|> |W_2 V^{2n}|$. We will then consider the admissible word $W_1 W_2 V^{2n}$, and we claim that it is simple. Indeed, the word in the alphabet $\{a,b\}$ corresponding to it, has the form $b^k a\ldots a$, and it is simple, because its length is less than $2k$. The proof is complete.
\end{proof}
Condition (\ref{eq-star1}), a variant of the exponential estimate for return times of the Teichm\"uller flow into compact sets, is proved for an arbitrary genus $g\ge 2$ in \cite[Prop.\,11.3]{BuSo18a} by modifying an 
argument from \cite{buf-jams}.  Theorem~\ref{main-moduli}  and Theorem~\ref{th-twisted} are proved completely.


\end{document}